\newtheorem{theorem}{Theorem}
\newtheorem{corollary}{Corollary}
\newtheorem{lemma}{Lemma}
\newtheorem{proposition}{Proposition}
\newtheorem{remark}{Remark}
\begin{document}
\centerline{}

\centerline{}

\title{Orbifold Euler Characteristics of Compactified Jacobians}

\author{Sofia Wood}

\begin{abstract}
We calculate the orbifold Euler characteristics of all the degree \(d\) fine universal compactified Jacobians \(\overline{\mathcal{J}}^d_{g,n}\) over the moduli space of stable curves of genus \(g\) with \(n\) marked points. These are defined in the paper \cite{PT23} by Pagani and Tommasi. We show that this orbifold Euler characteristic agrees with the Euler characteristic of \(\overline{\mathcal{M}}_{0, 2g+n}\) up to a combinatorial factor, and in particular, is independent of the degree d and the choice of degree \(d\) fine compactified universal Jacobian. As a special case, we see that the fine compactified universal Jacobians \(\overline{\mathcal{J}}^d_{g,n}(\phi)\) defined in \cite{KP19}, depending on a universal polarization \(\phi\), have orbifold Euler characteristic independent of \(\phi\).\\

\noindent
2020 Mathematics Subject Classification: 14H10, 14K10
\end{abstract}

\maketitle

\section{Introduction}
\subsection{Background and motivation}
\noindent
Orbifold Euler characteristics have been studied for a long time by mathematicians as a generalization of topological Euler characteristics. Perhaps most famously, in \cite{HaZa}, Harer and Zagier calculate the orbifold Euler characteristics of the moduli spaces of smooth genus \(g\), \(n\)-pointed curves for all \(g\) and \(n\) such that \(2g-2+n>0\), obtaining the formula:
\begin{equation*}
\chi_{orb}(\mathcal{M}_{g,n}) = \frac{B_{2g}(-1)^n(2g-1)(2g+n-3)!}{(2g)!}.
\end{equation*}
They then proceed to use this calculation to compute the ordinary Euler characteristics of \(\mathcal{M}_{g}\) for \(g \geq 2\), which consequently, can be used to deduce facts about the cohomology of \(\mathcal{M}_{g}\). In the paper \cite{BiHa}, Bini and Harer use the Harer-Zagier calculation to compute \(\chi_{orb}(\overline{\mathcal{M}}_{g,n})\) for all \(g,n\) such that \(2g-2+n>0\). More recenty, some examples of orbifold Euler characteristics of the moduli spaces of abelian differentials \(\mathbb{P}\Omega\mathcal{M}_{g,n}(\mu)\) are calculated in the paper \cite{CMZ} by Zachhuber, Costantini and M\"oller. In this paper, we extend these calculations to fine compactified universal Jacobians.\\

\noindent
Over the moduli space of smooth curves \(\mathcal{M}_{g,n}\), there is a smooth, proper morphism \(\mathcal{J}^d_{g,n} \rightarrow \mathcal{M}_{g,n}\) of Deligne--Mumford (DM) stacks where the geometric points of \(\mathcal{J}^d_{g,n}\) are pairs consisting of a genus \(g\) algebraic curve with \(n\) marked points and a line bundle of degree \(d\) on it. The morphism is the forgetful morphism, obtained by forgetting the data of the line bundle. The fibers over geometric points are isomorphic to abelian varieties and therefore have Euler characteristic zero. By properties of the morphism and of orbifold Euler characteristics,
\begin{align*}
    \chi_{orb}(\mathcal{J}^d_{g,n}) = \chi_{orb}(\mathcal{M}_{g,n})\chi_{orb}(F) = 0,
\end{align*}
for \(g>0\) where \(F\) is any fiber of the morphism.\\

\noindent
To obtain a space with non-trivial Euler characteristic, it is natural to consider an extension of this morphism to a proper morphism over the compactification \(\overline{\mathcal{M}}_{g,n}\) of \(\mathcal{M}_{g,n}\).
Finding such an extension is a classical problem and many such extensions have been found, such as those in \cite{Cap94}, \cite{Sim94}, \cite{Pan96}, \cite{Est01} and \cite{Mel16}. In the paper \cite{KP19}, Kass and Pagani construct fine compactified universal Jacobians \(\overline{\mathcal{J}}^d_{g,n}(\phi)\), which depend on a stability condition \(\phi\). In the more recent paper \cite{PT23}, it is shown that these universal compactified Jacobians belong to the more general class of fine compactified universal Jacobians.\\

\noindent
We compute the orbifold Euler characteristics of all fine compactified universal Jacobians over \(\overline{\mathcal{M}}_{g,n}\) such that \(2g-2+n>0\).\\

\noindent
Recently, universal compactified Jacobians have been used in the exploration of tautological classes on the moduli space of stable curves, e.g. in the study of (logarithmic) double ramification cycles (see \cite{MR3851130, LogDR}). This raises important questions, e.g. how to define the tautological ring of these spaces, and lift known tautological relations to this ring (extending work in \cite{MR3477953}). Work in this direction can be seen, for example, in \cite{MR23} and \cite{BaLh}. The work in this paper was partly motivated by the goal of obtaining a better understanding of the global geometry of these compactified Jacobians.\\

\begin{theorem}\label{main theorem}
For all \(d \in \mathbb{Z}\), non-negative integers \(g\) and \(n\) such that \(2g-2+n>0\) and universal fine  compactified Jacobians \(\overline{\mathcal{J}}^d_{g,n}\) of degree \(d\), we have:
\begin{align*}
\chi_{orb}(\overline{\mathcal{J}}^d_{g,n}) = \frac{1}{2^g(g!)}\chi(\overline{\mathcal{M}}_{0,2g+n})
\end{align*}
where \(\chi(\overline{\mathcal{M}}_{0,2g+n})\) is the ordinary topological Euler characteristic of the variety \(\overline{\mathcal{M}}_{0,2g+n}\).
\end{theorem}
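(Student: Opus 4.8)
The plan is to exploit the multiplicativity of the orbifold Euler characteristic along the forgetful morphism $\pi\colon \overline{\mathcal{J}}^d_{g,n} \to \overline{\mathcal{M}}_{g,n}$ combined with a stratification of the base by topological type. Over the open locus $\mathcal{M}_{g,n}$ the fibers are abelian varieties and contribute zero, so all of the orbifold Euler characteristic is concentrated on the boundary strata where the curves are singular. The key point, which I expect follows from the structure of fine compactified Jacobians described in \cite{PT23}, is that over a stratum parametrizing curves of a fixed stable dual graph $\Gamma$, the fiber of $\pi$ is itself a torus bundle over a product of compactified Jacobians of the normalized components, and the combinatorial data of which degree-$d$ multidegrees are ``$\phi$-stable'' determines the fiber up to a finite count.

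First I would set up the general formula
\begin{equation*}
\chi_{orb}(\overline{\mathcal{J}}^d_{g,n}) = \sum_{\Gamma} \chi_{orb}(\mathcal{M}_\Gamma)\,\chi_{orb}(F_\Gamma),
\end{equation*}
where the sum is over stable dual graphs $\Gamma$ of genus $g$ with $n$ legs, $\mathcal{M}_\Gamma$ is the corresponding locally closed stratum in $\overline{\mathcal{M}}_{g,n}$, and $F_\Gamma$ is the fiber of $\pi$ over a point of that stratum. The essential reduction is to show that $\chi_{orb}(F_\Gamma)$ vanishes unless $\Gamma$ is \emph{totally degenerate}, i.e.\ all components of the curve are rational; this is because each positive-genus vertex contributes a factor that is (up to a compact torus) a genuine abelian variety, forcing the Euler characteristic of the fiber to be zero by the same multiplicativity argument used in the smooth case. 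Thus only the deepest strata, where the normalization is a disjoint union of $\mathbb{P}^1$'s, survive.

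Next I would analyze the surviving totally degenerate strata. For such a $\Gamma$ the fiber $F_\Gamma$ is a finite union of tori of complementary dimension $g$ (the first cohomology of the dual graph), so its Euler characteristic picks up only the finite count of $\phi$-stable multidegrees together with a factor recording the automorphisms; this finite count is where the claimed independence of $d$ and of $\phi$ must emerge, presumably because the number of stable degree-$d$ classes is a topological invariant of $\Gamma$ not depending on these choices. I would then recognize the resulting weighted sum over totally degenerate graphs as a combinatorial expression already known to compute $\chi(\overline{\mathcal{M}}_{0,2g+n})$, the extra $2g+n$ marked points on the rational curve arising from the $n$ original legs together with the $2g$ half-edges produced by separating the $g$ independent loops of a genus-$g$ graph; the normalization factor $1/(2^g g!)$ is exactly the order of the hyperoctahedral group $B_g$ acting by permuting the $g$ loops and swapping the two endpoints of each.

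The main obstacle will be the bookkeeping in the last step: identifying the automorphism groups correctly so that the orbifold weights assemble into the clean factor $1/(2^g g!)$, and matching the resulting sum to a known formula for $\chi(\overline{\mathcal{M}}_{0,2g+n})$ rather than merely to an unevaluated sum over trivalent graphs. I expect the cleanest route is to reinterpret the totally degenerate stratification of $\overline{\mathcal{J}}^d_{g,n}$ directly as a stratification of (a quotient of) $\overline{\mathcal{M}}_{0,2g+n}$, so that the Euler characteristics agree stratum by stratum after dividing by the $B_g$-action; verifying that the compactified-Jacobian fiber data precisely reconstructs the gluing of the $2g$ extra points, and that the $\phi$-dependence genuinely cancels in the count, is the technical heart of the argument.
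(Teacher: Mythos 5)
Your overall skeleton matches the paper's: stratify $\overline{\mathcal{M}}_{g,n}$ by dual graph, use multiplicativity of $\chi_{orb}$ over each stratum, kill every stratum whose graph has a positive-genus vertex, and match the surviving sum over totally degenerate graphs to $\chi(\overline{\mathcal{M}}_{0,2g+n})/(2^g\,g!)$. However, the step where you compute the fiber Euler characteristic over a totally degenerate stratum has a genuine gap. You describe the fiber as ``a finite union of tori of complementary dimension $g$'' whose Euler characteristic ``picks up only the finite count of $\phi$-stable multidegrees.'' As written this is inconsistent: a finite disjoint union of copies of $(\mathbb{C}^{\times})^{g}$ has topological Euler characteristic zero for $g>0$, so your description would force the entire answer to vanish. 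What actually happens (Theorem~\ref{Jacobian stratification} and Corollary~\ref{Jacobian stratification 2}) is that the fiber $\overline{J}^d_{\sigma}(C)$ is stratified by the non-disconnecting subsets $S\subset E(\Gamma)$ of nodes where the sheaf fails to be locally free, the $S$-stratum being a disjoint union of $|\sigma_S(\Gamma_S)|=c(\Gamma_S)$ generalized Jacobians isomorphic to $(\mathbb{C}^{\times})^{b_1(\Gamma_S)}$; only the zero-dimensional strata contribute, and these are indexed exactly by the spanning trees of $\Gamma$, each contributing a single point since $c(\Gamma_S)=1$ when $\Gamma_S$ is a tree. Hence $\chi_{top}(F_\Gamma)=c(\Gamma)$, the number of spanning trees, and the identity $|\sigma(G)|=c(G)$ valid for every stability condition is precisely where the independence of $d$ and of the choice of fine compactified Jacobian is secured. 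Your proposal never isolates this number.

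This omission also undermines your final combinatorial step. ``Separating the $g$ independent loops of a genus-$g$ graph'' is not canonical: to cut a totally degenerate graph $\Gamma$ down to a genus-$0$ graph with $2g$ extra legs one must choose a spanning tree $T$ and sever the $g$ edges of $E(\Gamma)\setminus E(T)$, and the number of such choices is exactly the factor $c(\Gamma)$ supplied by the fiber. The paper converts $\sum_{\Gamma}c(\Gamma)\chi_{orb}(\mathcal{M}^{\Gamma})$ into a sum over pairs $(\Gamma,T)$ weighted by $1/|\mathrm{Aut}(\Gamma,T)|$ via orbit--stabilizer, and then exhibits a surjection from $G(0,2g+n)$ onto such pairs with fibers of size $2^g\,g!/|\mathrm{Aut}(\Gamma,T)|$. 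Your hyperoctahedral-group heuristic for the factor $1/(2^g\,g!)$ is the right instinct, but without the spanning-tree count feeding into the sum the automorphism bookkeeping does not close up.
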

\noindent
This may be surprising to some readers, in light of the fact that in Section 6 of \cite{KP19}, it is shown that there exist \(g,n\) and \(d\), along with universal polarizations \(\phi_1\) and \(\phi_2\), such that the corresponding fine compactified universal Jacobians \(\overline{\mathcal{J}}^d_{g,n}(\phi_1)\)  and \(\overline{\mathcal{J}}^d_{g,n}(\phi_2)\) are not isomorphic as Deligne--Mumford stacks.
\subsection{Idea of the proof}
We consider the morphisms \(\overline{\mathcal{J}}^d_{g,n} \rightarrow \overline{\mathcal{M}}_{g,n}\) for a fine compactified universal degree \(d\) Jacobian over the moduli space of stable curves. Such a morphism is a proper, representable morphism of DM stacks and the fiber over a point \([(C;p_1,...,p_n)]\) is a degree \(d\) fine, smoothable compactified Jacobian over \(C\). Such compactified Jacobians over a nodal curve \(C\) are defined in terms of stability conditions \(\sigma\) and we denote the Jacobian by \(\overline{J}^d_{\sigma}(C)\).\\

\noindent
In particular, under the stratification \(\overline{\mathcal{M}}_{g,n} = \underset{\Gamma \in G(g,n)}{\bigsqcup}\mathcal{M}^{\Gamma}\) by dual graph, we show that the topological Euler characteristic of the fiber over a geometric point \([(C;p_1,...,p_n)]\) in \(\mathcal{M}^{\Gamma}\) depends only on \(\Gamma\).
\subsection{Stratifying the fibers and their Euler characteristics}We adapt results from the paper \cite{MeVi}, where a stratification of a compactified Jacobian \(\overline{J}^d_{\sigma_{\phi}}(C)\) corresponding to a stability condition \(\sigma_{\phi}\) which is defined in terms of a polarization \(\phi\) on \(C\) is obtained. The strata in this stratification are all isomorphic to the generalized Jacobian of some curve which is a partial normalization of \(C\) at a subset of its nodes. We show that the same is true for any smoothable, fine compactified Jacobian \(\overline{J}^d(C)\) of a nodal curve \(C\) which is necessarily defined in terms of some stability condition \(\sigma\) on its dual graph \(\Gamma(C)\). We write \(\overline{J}^d(C) = \overline{J}^d_{\sigma}(C)\).\\

\noindent
If \(G(g,n)\) is a set of representatives of automorphism classes of stable graphs of genus \(g\) with \(n\) marked points and \(G(g,n)^0\) is the subset of \(G(g,n)\) consisting of stable graphs whose vertices all have genus 0, we use our stratification of a smoothable fine compactified Jacobian of \(C\) corresponding to an arbitrary stability condition \(\sigma\) on \(\Gamma(C)\) to obtain:
\begin{align*}
\chi_{top}(\overline{J}_{\sigma}^d(C)^{an}) =\chi_{orb}(\overline{J}_{\sigma}^d(C)) = \begin{cases}c(\Gamma(C))\;\;\;\text{if } \Gamma(C) \in G(g,n)^0\\
0\;\;\;\;\;\;\;\; \text{if }\Gamma(C) \in G(g,n)\setminus G(g,n)^0\end{cases}
\end{align*}
where \(c(\Gamma(C))\) is the number of spanning trees of \(\Gamma(C)\).

\subsection{Stratifying and obtaining the orbifold Euler characteristic of a fine compactified universal Jacobian}
We then apply Proposition \ref{constant fiber multiplicativity} to the morphism \(\overline{\mathcal{J}}^d_{g,n}|_{\mathcal{M}^{\Gamma}} \rightarrow \mathcal{M}^{\Gamma}\). Informally, this proposition states that for a morphism of DM stacks \(\mathcal{M} \rightarrow \mathcal{N}\) which is a fibration on some stratification of the base such that all fibers have the same topological Euler characteristic, \(\chi_{orb}(\mathcal{M}) = \chi_{top}(F^{an})\chi_{orb}(\mathcal{N})\) where \(F\) is any fiber.\\

\noindent
Using this, we obtain:
\begin{align*}\chi_{orb}(\overline{\mathcal{J}}^d_{g,n}|_{\mathcal{M}^{\Gamma}}) = \chi_{orb}(\mathcal{M}^{\Gamma})\chi_{top}(\overline{J}^d(C)) \\=  \begin{cases}c(\Gamma)\chi_{orb}(\mathcal{M}^{\Gamma})\;\;\; \text{if }\Gamma \in G(g,n)^0\\
0\;\;\;\;\;\;\;\;\;\;\;\;\;\;\;\;\;\;\;\;\;\; \text{otherwise}.\end{cases}
\end{align*}
where \(C\) is any genus \(g\) nodal curve with dual graph \(\Gamma\).\\

\noindent
Using the fact that orbifold Euler characteristics are additive under locally closed stratifications, we obtain the formula:
\begin{align*}
\chi_{orb}(\overline{\mathcal{J}}^d_{g,n}) = \sum_{\Gamma \in G(g,n)^0}c(\Gamma)\chi_{orb}(\mathcal{M}^{\Gamma}) = \sum_{\Gamma \in G(g,n)^0}\sum\limits_{\substack{T \text{ is a}\\\text{spanning}\\ \text{tree of }\Gamma}}\chi_{orb}(\mathcal{M}^{\Gamma}).
\end{align*}
By combinatorial reasoning, we manipulate this double sum to obtain the formula:
\begin{align*}
\chi_{orb}(\overline{\mathcal{J}}^d_{g,n}) = \frac{1}{2^g(g!)}\chi_{orb}(\overline{\mathcal{M}}_{0,2g+n}).
\end{align*}
Since \(\overline{\mathcal{M}}_{0,2g+n}\) is a variety, \(\chi_{orb}(\overline{\mathcal{M}}_{0,2g+n}) = \chi(\overline{\mathcal{M}}_{0,2g+n})\).

\section{acknowledgements}
\noindent
The present paper forms part of a project at ETH Z\"urich, supervised by Rahul Pandharipande and Johannes Schmitt. We want to thank them for proposing the topic, very useful feedback and suggestions for improvements. We would also like to thank Nicola Pagani for suggesting the generalization of Theorem \ref{main theorem} to all fine compactified universal Jacobians, as studied in \cite{PT23} rather than considering just those defined in terms of a universal polarization as in \cite{KP19}.
\section{Definitions and notation}
\noindent
In the following section, we recall the theory of (compactified) Jacobians following the treatment of \cite{PT23}. Readers familiar with this subject are encouraged to skip to the next section and refer back to this section as needed.\\

\noindent
Associated to a complex, nonsingular projective curve \(C\) is a {\bf Jacobian \(J^0(C)\)} which is a complex, projective abelian variety whose dimension is equal to the genus of the curve. These are moduli spaces parametrizing degree \(0\) line bundles on \(C\) up to isomorphism. The group law on \(J^0(C)\) is by tensor product. In fact, as an abstract group, it is isomorphic to \((\mathbb{R}/\mathbb{Z})^{2g}\) and is the subgroup of \(\mathrm{Pic}(C)\) consisting of degree 0 line bundles. For \(g \geq 1\), it has \(n\)-torsion isomorphic to \((\mathbb{Z}/n\mathbb{Z})^{2g}\). In particular, for every \(n\), there is an element of order \(n\) in \(J^0(C)\) that generates a subgroup of order \(n\). If \(d \in \mathbb{Z}\), there is a variety \(J^d(C)\) parametrizing degree \(d\) line bundles which is isomorphic to \(J^0(C)\) as a variety where the isomorphism is given by tensoring with a fixed line bundle of degree \(d\). \(J^d(C)\) is no longer an algebraic group but \(J^0(C)\) acts freely on \(J^d(C)\) via tensor products.\\

\noindent
Similarly, singular complex projective curves \(C\) have associated {\bf generalized Jacobian varieties} \(J^{\underline{0}}(C)\) which in general fail to be proper over \(\mathbb{C}\) (see below for the definition). These Jacobian varieties are abelian group schemes and there are several approaches to compactifying these varieties.\\

\noindent
Given a nodal curve \(C\) and its total normalization \(v: C^v \rightarrow C\), if \(C_1,...,C_t\) are the irreducible components of \(C^v\), then the {\bf multidegree}  of \(\mathcal{L} \in \mathrm{Pic}(C)\) is the vector \(\underline{\mathrm{deg}(\mathcal{L})} = (\mathrm{deg}_{C_i}(v^*\mathcal{L}|_{C_i}))_i\). Its {\bf total degree}, \(\mathrm{deg}(\mathcal{L})\) is given by the sum of the components of the vector \(\underline{\mathrm{deg}(\mathcal{L})}\). As a group, \(J^{\underline{0}}(C)\) is the subgroup of \(Pic(C)\) consisting of line bundles of multidegree \(\underline{0}\) and for each \(\underline{d}\in \mathbb{Z}^t\), there is a scheme \(J^{\underline{d}}(C)\) parametrizing line bundles of multidegree \(\underline{d}\) on \(C\). In particular, these are all isomorphic to \(J^{\underline{0}}(C)\) where the isomorphism is given by tensoring with a fixed line bundle of multidegree \(\underline{d}\).\\

\noindent
Given a nodal curve \(C\) with dual graph \(\Gamma(C)\), we define:
\begin{align*}
b_1(\Gamma(C)) = |E(\Gamma(C))| - |V(\Gamma(C))| + 1.
\end{align*}
If \(\delta\) is the number of nodes of \(C\), then this quantity is \(\delta - t + 1\).\\

\noindent
There is an exact sequence of group schemes: (\cite[p.89]{ACVII})
\begin{align*}
1 \rightarrow (\mathbb{C}^{\times})^{b_1(\Gamma(C))} \rightarrow J^{\underline{0}}(C) \xrightarrow{\mathcal{L}\mapsto v^*(\mathcal{L})} J^{\underline{0}}(C^v) \cong \prod_{i=1}^tJ^0(C_i) \rightarrow 0 
\end{align*}
When  all the \(C_i\) are genus 0 curves, \(J^{\underline{d}}(C) \cong (\mathbb{C}^{\times})^{b_1(\Gamma(C))}\).\\

\noindent
The construction of Jacobians also works for families of smooth curves and for each \(g,n\) such that \(2g -2 + n>0\), there exists a universal Jacobian \(\mathcal{J}^d_{g,n}\) which is a DM stack along with a smooth, proper morphism \(\mathcal{J}^d_{g,n} \rightarrow \mathcal{M}_{g,n}\) such that the fiber over any geometric point \([(C;p_1,...,p_n)]\) of \(\mathcal{M}_{g,n}\) is the Jacobian variety \(J^d(C)\).\\

\noindent
A coherent sheaf on a nodal curve \(C\) has {\bf rank 1} if the stalk at each generic point of \(C\) has length 1. It is {\bf torsion free} if it has no embedded components and it is {\bf singular at P} if it fails to be locally free at \(P\). If \(\mathcal{F}\) is a torsion free sheaf on \(C\), we say it is {\bf simple} if its automorphism group is \(\mathbb{C}^{\times}\). This is equivalent to the condition that removing the singular points of \(\mathcal{F}\) from \(C\) does not disconnect \(C\).\\

\noindent
Every rank 1, torsion free, simple sheaf \(\mathcal{F}\) on a nodal curve \(C\)  can be associated with a pair \((S(\mathcal{F}), \underline{\mathrm{deg}(\mathcal{F})})\). Here \(S(\mathcal{F})\) is the set of nodes of \(C\) at which \(\mathcal{F}\) fails to be locally free. Since \(\mathcal{F}\) is simple, \(S(\mathcal{F})\) is a {\bf non-disconnecting} subset of nodes of \(C\). The multidegree \(\underline{\mathrm{deg}(\mathcal{F})}\) is the multidegree of the maximal torsion free quotient of the pullback of \(\mathcal{F}\) under the total normalization of \(C\). In other words, the partial normalization \(C_S\) of \(C\) at the nodes \(S(\mathcal{F})\) of \(C\) is connected.\\

\noindent
If \(C\) is a nodal curve, a degree \(d\) {\bf fine compactified Jacobian} \(\overline{J}^d(C)\) is a nonempty, connected, proper, open subscheme of \(Simp^d(C)\) where \(Simp^d(C)\) is the scheme parametrizing rank 1, torsion free, simple coherent sheaves on \(C\). It is {\bf smoothable} if there exists a regular smoothing \(\mathcal{C}\rightarrow Spec(\mathbb{C}[[t]])\) of \(C\) such that the fiber over \(0 \in \mathbb{C}[[t]]\) of a morphism \(U \rightarrow Spec(\mathbb{C}[[t]])\) where \(U\) is an open, proper subscheme of \(Simp^d(\mathcal{C}/\mathbb{C}[[t]])\) is \(\overline{J}^d(C)\).\\

\noindent
A degree \(d\)  {\bf fine compactified universal Jacobian} is an open substack of \(Simp^d(\overline{C}_{g,n}/\overline{\mathcal{M}}_{g,n})\) (see \cite{PT23} for a definition) that is proper over \(\overline{\mathcal{M}}_{g,n}\). In particular, since \(Simp^d(\overline{C}_{g,n}/\overline{\mathcal{M}}_{g,n})\) is representable over \(\overline{\mathcal{M}}_{g,n}\) and since open immersions of DM stacks are representable, any fine compactified universal Jacobian is representable over \(\overline{\mathcal{M}}_{g,n}\).\\

\noindent
We now define stability conditions and explain their relation to fine, smoothable compactified Jacobians. If \(G \subset \Gamma\) is a connected, spanning subgraph of \(\Gamma\), define
\begin{align*}
S^d_{\Gamma}(G) = \left \{ \underline{d}\in \mathbb{Z}^{|V(\Gamma)|}: \sum_{v \in V(\Gamma)}d(v) = d - |E(\Gamma)\setminus E(G)|\right \} \subset \mathbb{Z}^{|V(\Gamma)|}.
\end{align*}
If \(\mathcal{F}\) is a rank 1, torsion free, simple sheaf on a nodal curve \(C\), then \(\underline{\mathrm{deg}}(\mathcal{F}) \in S^d_{\Gamma}(\Gamma(\mathcal{F}))\) where \(\Gamma(\mathcal{F})\) is the graph obtained from \(\Gamma(C)\) by removing the edges corresponding to the nodes of \(C\) at which \(\mathcal{F}\) fails to be locally free.\\

\noindent
The {\bf twister} of a graph \(\Gamma\) at the vertex \(v\) is defined to be the element of \(\mathbb{Z}^{|V(\Gamma)|}\) defined by:
\begin{align*}
    \mathrm{Tw}_{\Gamma,v}(w) = \begin{cases}
        \substack{\# \text{ edges of }\Gamma\text{ having endpoints } v\text{ and } w,}\;\;\;\;\;\;\;\;\;v \neq w\\
        \substack{-\# \text{ non-loop edges of }\Gamma\\\text{ having one of its endpoints }v = w,}\;\;\;\;\;\;\;\;\;\;\;\;\;\;\;\;v = w
    \end{cases}.
\end{align*}
The twister group, \(\mathrm{Tw}(\Gamma)\) is the subgroup of \(\mathbb{Z}^{|V(\Gamma)|}\) generated by the elements \(\lbrace\mathrm{Tw}_{\Gamma,v}\rbrace_{v \in V(\Gamma)}\). We have the inclusions \(\mathrm{Tw}(\Gamma) \subset S^0_{\Gamma}(\Gamma) \subset \mathbb{Z}^{|V(\Gamma)|}\) and therefore, for a connected, spanning subgraph \(G \subset \Gamma\), \(\mathrm{Tw}(G)\) acts on \(S^d_{\Gamma}(G)\) by vector addition.\\

\noindent
A {\bf degree \(d\) stability condition \(\sigma\)} on \(\Gamma\) is a set of pairs \((G, \underline{d})\) such that \(\underline{d} \in S^d_{\Gamma}(G)\) where \(G \subset \Gamma\)  is a connected, spanning subgraph. The pairs \((G, \underline{d})\) are additionally required to satisfy the following two conditions:\\

\noindent
(1) If \(G\) is a connected, spanning subgraph of \(\Gamma\), then for all edges \(e\) of \(E(\Gamma)\setminus E(G)\) with endpoints \(v_1\) and \(v_2\), if \((G,\underline{d}) \in \sigma\), then \((G \cup \lbrace e \rbrace, \underline{d} + \underline{e}_{v_i})\) is in \(\sigma\) for \(i = 1,2\) where \(\underline{e}_{v_i}\) is the standard basis vector of \(\mathbb{Z}^{|V(\Gamma)|}\) corresponding to vertex \(v_i\).\\
(2) For every connected, spanning subgraph \(G\) of \(\Gamma\):
\begin{align*}
    \sigma(G) = \lbrace \underline{d}: (G, \underline{d}) \in \sigma \rbrace \subset S^d_{G}(G)
\end{align*}
is a minimal, complete set of representatives for the action of the twister group Tw(\(G\)) on \(S^d_{\Gamma}(G)\).\\

\noindent
For every connected, spanning subgraph \(G\) of \(\Gamma\), we have \(|\sigma(G)| = c(G)\) by \cite[Remark 4.4]{PT23}.\\

\noindent
For a nodal curve \(C\), the scheme \(Simp^d(C)\) has a stratification into locally closed subsets \(Simp^d(C) = \underset{(G, \underline{d})}{\bigsqcup} \overline{J}_{(G,\underline{d})}(C)\) where \(\overline{J}_{(G,\underline{d})}(C)\) is the locus whose points correspond to sheaves \(\mathcal{F}\) where \(\Gamma(\mathcal{F}) = G\) and \(\underline{\mathrm{deg}(\mathcal{F})} = \underline{d}\). Here the disjoint union is over all connected, spanning subgraphs \(G\) of \(\Gamma\) and all \(\underline{d}\in \mathbb{Z}^{|V(\Gamma)|}\) such that
\begin{align*}
\sum_{i = 1}^{|V(\Gamma)|}d_i = d - |E(\Gamma)\setminus E(G)|.
\end{align*}
We view these as schemes, endowed with the reduced schematic structure.\\

\noindent
Given a stability condition \(\sigma\) on the dual graph \(\Gamma\) of a nodal curve \(C\), a rank 1, torsion free simple sheaf \(\mathcal{F}\) is {\bf \(\sigma\)-stable} if \((\Gamma(\mathcal{F}), \underline{\mathrm{deg}(\mathcal{F}}))\) is in \(\sigma\) where \(\Gamma(\mathcal{F})\) is the subgraph of \(\Gamma(C)\) obtained by removing edges of \(\Gamma(C)\) corresponding to nodes of \(C\) at which \(\mathcal{F}\) fails to be locally free.

\begin{theorem}\cite[Corollary 6.4]{PT23}
    Given a degree \(d\) stability condition \(\sigma\) on the dual graph \(\Gamma(C)\) of a nodal curve \(C\), the subscheme \(\overline{J}^d_{\sigma}(C)\) of \(Simp^d(X)\) parametrizing sheaves that are \(\sigma\)-stable is a smoothable, degree \(d\) fine compactified Jacobian for \(C\).
    \end{theorem}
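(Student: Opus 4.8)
The plan is to verify the five defining properties in turn: that $\overline{J}^d_{\sigma}(C)$ is a nonempty, connected, proper, open subscheme of $Simp^d(C)$ that is moreover smoothable. Nonemptiness is immediate, since $\overline{J}^d_{\sigma}(C) = \bigsqcup_{(G,\underline{d}) \in \sigma} \overline{J}_{(G,\underline{d})}(C)$ and the stratum indexed by $(\Gamma, \underline{d})$ for any $\underline{d} \in \sigma(\Gamma)$ is nonempty because $|\sigma(\Gamma)| = c(\Gamma) \geq 1$, a connected graph always having a spanning tree. The \emph{fineness} requires no separate argument: it is guaranteed by working inside $Simp^d(C)$, whose points parametrize simple sheaves.

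First I would establish openness. The stratification $Simp^d(C) = \bigsqcup \overline{J}_{(G,\underline{d})}(C)$ carries a specialization order in which generization corresponds to making a sheaf locally free at an additional node, that is, to adjoining an edge $e$ with endpoints $v_1, v_2$ to $G$ and redistributing the one unit of recovered degree as $\underline{d} \mapsto \underline{d} + \underline{e}_{v_i}$ for $i \in \{1,2\}$; this is consistent with the degree bookkeeping, since passing from $G$ to $G \cup \{e\}$ raises the prescribed total degree in $S^d_{\Gamma}(\cdot)$ by one. A constructible union of strata is open precisely when it is stable under generization, and this upward closure is exactly condition (1) in the definition of $\sigma$. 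Iterating condition (1) shows $\overline{J}^d_{\sigma}(C)$ contains every generization down to the line-bundle strata, so it is open.

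The heart of the argument, and the step I expect to be the main obstacle, is properness, which I would prove simultaneously with smoothability. Fix a regular smoothing $\mathcal{C} \to Spec(\mathbb{C}[[t]])$ of $C$, and let $U \subset Simp^d(\mathcal{C}/\mathbb{C}[[t]])$ be the subscheme whose generic fiber is all of $Pic^d$ of the smooth generic curve (which equals its whole $Simp^d$ and is already proper) and whose special fiber is $\overline{J}^d_{\sigma}(C)$; openness of $U$ reduces to the openness established above, since its generic fiber is everything and so $U$ is generization-stable in the total space. Properness of $U$ over $\mathbb{C}[[t]]$ I would check by the valuative criterion: a line bundle on the smooth generic fiber extends over a DVR to a family of sheaves on $\mathcal{C}$, and the possible extensions restrict on the special fiber $C$ to sheaves whose multidegrees differ exactly by twisting by components of the special fiber, that is, by the action of $\mathrm{Tw}(G)$ on $S^d_{\Gamma}(G)$. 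Condition (2) — that $\sigma(G)$ is a \emph{complete} set of representatives — furnishes at least one extension landing in $\overline{J}^d_{\sigma}(C)$, giving existence and hence universal closedness, while \emph{minimality} of the representatives furnishes at most one, giving uniqueness and hence separatedness. The technical crux is precisely the identification of the set of special-fiber multidegrees of all such extensions with a single $\mathrm{Tw}(G)$-orbit, which requires tracking how restrictions to $C$ of line bundles on the regular total space $\mathcal{C}$ change multidegree under twists by vertical divisors. Granting properness of $U/\mathbb{C}[[t]]$, the special fiber $\overline{J}^d_{\sigma}(C)$ is proper over $\mathbb{C}$ by base change, and the existence of the pair $(\mathcal{C}, U)$ is exactly the definition of smoothability.

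Finally, I would deduce connectedness from the same family. Since $U \to Spec(\mathbb{C}[[t]])$ is proper and flat with geometrically connected generic fiber — the $Pic^d$ of a smooth curve being a torsor under an abelian variety — the special fiber is connected by the connectedness principle: concretely, $\mathcal{O} \to f_*\mathcal{O}_U$ is an isomorphism by cohomology and base change, so Stein factorization is trivial and every fiber is connected. This yields connectedness of $\overline{J}^d_{\sigma}(C)$ and completes the verification of all five properties.
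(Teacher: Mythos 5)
First, a point of order: the paper does not prove this statement at all --- it is imported as \cite[Corollary 6.4]{PT23} and used as a black box --- so there is no in-paper argument to compare yours against. Judged on its own terms, your outline has the right overall shape (nonemptiness from \(|\sigma(\Gamma)|=c(\Gamma)\geq 1\); openness from constructibility plus stability under generization, which is exactly what condition (1) encodes; smoothability witnessed by the family \(U\); connectedness via Stein factorization). Two things in the ``easy'' parts are asserted rather than proved and do need justification: the precise closure relations among the strata \(\overline{J}_{(G,\underline{d})}(C)\) (that generization smooths the sheaf at a node and deposits the recovered unit of degree on one of the two branches), and the fact that every irreducible component of \(U\) dominates \(\mathrm{Spec}(\mathbb{C}[[t]])\) --- you invoke flatness of \(U\) over \(\mathbb{C}[[t]]\), which is neither obvious nor what you actually need; density of the generic fibre (via deformation theory of simple sheaves) is the correct input before you can conclude \(\mathcal{O}\to f_*\mathcal{O}_U\) is an isomorphism.

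The genuine gap is in properness, which you yourself identify as the crux but then argue only in the unramified case. Checking the valuative criterion for \(U\to\mathrm{Spec}(\mathbb{C}[[t]])\) requires test arcs \(\mathrm{Spec}(A)\to\mathrm{Spec}(\mathbb{C}[[t]])\) for arbitrary DVRs \(A\), and when \(t\) maps to an element of valuation \(>1\) the base-changed total space \(\mathcal{C}\times_{\mathbb{C}[[t]]}A\) is no longer regular at the nodes: the extension of a generic-fibre line bundle is then obtained by semistable reduction (resolving to insert chains of rational curves and pushing forward), and its restriction to \(C\) is generically \emph{not} locally free at the nodes over which the arc is ramified. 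This is exactly where the strata with \(G\subsetneq\Gamma\) arise, and where condition (2) for proper spanning subgraphs \(G\) --- completeness of \(\sigma(G)\) for existence, minimality for uniqueness --- together with the compatibility condition (1) is actually needed. Your mechanism (``twists by vertical divisors of the regular total space \(\mathcal{C}\)'') only produces line-bundle limits and only uses condition (2) for \(G=\Gamma\), so the identification of the set of all possible limits of a given arc with a single \(\mathrm{Tw}(G)\)-orbit in \(S^d_\Gamma(G)\) --- which you correctly flag as the technical crux --- is not established by anything in your sketch. Likewise, test arcs supported entirely over the closed point (one-parameter families of \(\sigma\)-stable sheaves on the fixed curve \(C\)) are not addressed; either they must be handled directly, or you must invoke and verify the hypotheses of a refined valuative criterion that lets you restrict to arcs through the generic point of \(U\). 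Filling this in is essentially the content of the cited result.
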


\begin{theorem}\cite[Corollary 7.11]{PT23}\label{bijection}
If \(C\) is a nodal curve, then there is a bijection between degree \(d\) fine, smoothable compactified Jacobians of \(C\) and degree \(d\) stability conditions on \(\Gamma(C)\).
\end{theorem}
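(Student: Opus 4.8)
The plan is to promote the construction of Corollary~6.4, which sends a degree $d$ stability condition $\sigma$ to the fine compactified Jacobian $\overline{J}^d_\sigma(C)$, to a bijection by producing an explicit inverse. The starting observation is that, directly from the definition of $\sigma$-stability together with the locally closed stratification of $Simp^d(C)$ recalled above, one has
\[
\overline{J}^d_\sigma(C) = \bigsqcup_{(G,\underline{d}) \in \sigma} \overline{J}_{(G,\underline{d})}(C);
\]
that is, $\overline{J}^d_\sigma(C)$ is exactly the union of the strata indexed by $\sigma$. This already gives injectivity of $\sigma \mapsto \overline{J}^d_\sigma(C)$: a stability condition is recovered from its compactified Jacobian as the set of indices $(G,\underline{d})$ of the strata that the subscheme contains, so distinct stability conditions yield distinct subschemes.

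For surjectivity I would take an arbitrary degree $d$ fine, smoothable compactified Jacobian $\overline{J}^d(C) \subseteq Simp^d(C)$ and set
\[
\sigma := \{(G,\underline{d}) : \overline{J}_{(G,\underline{d})}(C) \cap \overline{J}^d(C) \neq \emptyset\}.
\]
By smoothability I may fix a regular smoothing $\mathcal{C} \to \mathrm{Spec}(\mathbb{C}[[t]])$ realising $\overline{J}^d(C)$ as the special fibre of an open, proper subscheme $U \subseteq Simp^d(\mathcal{C}/\mathbb{C}[[t]])$. A first useful reduction is that the generic fibre of $U$ is all of $J^d(C_\eta)$: it is an open subscheme of the smooth projective $J^d(C_\eta)$ that is itself proper, hence a proper open immersion into a connected separated scheme, hence an open and closed immersion, hence everything. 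The task is then to check that $\sigma$ satisfies the two axioms of a stability condition and that $\overline{J}^d(C)$ coincides with $\overline{J}^d_\sigma(C)$.

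I expect axiom~(2) to be the main obstacle, and I would establish it through the valuative criterion applied to $U \to \mathrm{Spec}(\mathbb{C}[[t]])$. Given a one-parameter family of line bundles on the generic fibre, its possible flat limits in the special fibre differ precisely by twisting with line bundles $\mathcal{O}_{\mathcal{C}}(\sum_i a_i \mathcal{C}_i)$ supported on components of the special fibre, whose restrictions to $C$ realise exactly the twister group $\mathrm{Tw}(G)$ on each stratum; existence of a limit (properness) then forces $\sigma(G)$ to meet every $\mathrm{Tw}(G)$-orbit on $S^d_\Gamma(G)$, i.e. to be complete, while uniqueness of the limit (separatedness) forces it to meet each orbit at most once, i.e. to be minimal. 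Axiom~(1), the compatibility between a subgraph $G$ and an enlargement $G \cup \{e\}$, I would read off from the incidence of strata at a smoothing node: the non-locally-free locus over the node $e$ lies in the closures of exactly the two locally-free loci of multidegrees $\underline{d} + \underline{e}_{v_1}$ and $\underline{d} + \underline{e}_{v_2}$, so openness of $\overline{J}^d(C)$ propagates membership to both neighbouring pairs. Once $\sigma$ is known to be a stability condition, these same incidence and limit arguments show that $\overline{J}^d(C)$ is a union of strata and hence equals $\overline{J}^d_\sigma(C)$, completing surjectivity and the bijection. The technical heart, and the place where smoothability is indispensable, is the identification of the ambiguity of limits with the twister group, so that the purely geometric conditions proper and separated translate into the combinatorial conditions complete and minimal.
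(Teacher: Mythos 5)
The paper does not prove this statement at all: it is quoted verbatim from \cite[Corollary 7.11]{PT23} and used as a black box, so there is no internal proof to compare against. Your sketch is therefore being measured against the actual argument of Pagani--Tommasi, and in outline it follows the same route they (and, before them, Oda--Seshadri and Melo--Viviani) take: injectivity because $\overline{J}^d_\sigma(C)$ is by definition the union of the strata $\overline{J}_{(G,\underline{d})}(C)$ indexed by $\sigma$; surjectivity by reading off a candidate $\sigma$ from the strata met by a given $\overline{J}^d(C)$, with properness/separatedness of a smoothing translating into completeness/minimality of the $\mathrm{Tw}(G)$-orbit representatives, and openness giving axiom (1) via the closure relations between the non-locally-free stratum at a node $e$ and the two adjacent locally-free strata. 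That identification of the ambiguity of flat limits with the twister group is indeed the technical heart, and you have located it correctly.

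Two steps are genuinely incomplete as written. First, the claim that ``these same incidence and limit arguments show that $\overline{J}^d(C)$ is a union of strata'' does not follow formally from openness: an open subscheme meeting a stratum only contains an open subset of it, and since $Simp^d(C)$ is not separated, properness of $\overline{J}^d(C)$ does not make it closed either. Showing that a fine compactified Jacobian contains every stratum it meets is a separate lemma in \cite{PT23} (using connectedness of the strata as torsors under generalized Jacobians together with the valuative criterion along the smoothing), and your proof of injectivity and your definition of $\sigma$ both silently rely on it. Second, your valuative-criterion analysis only produces the twisters $\mathrm{Tw}_{\Gamma,v}$ via $\mathcal{O}_{\mathcal{C}}(\sum_i a_i\mathcal{C}_i)$, which handles condition (2) for $G=\Gamma$ (the locally free locus); for a proper spanning subgraph $G$ one must control limits that fail to be locally free at the nodes in $E(\Gamma)\setminus E(G)$, which requires base change and resolution of the resulting non-regular total spaces (or pushforward from partial normalizations of $\mathcal{C}$) before the group $\mathrm{Tw}(G)$, rather than $\mathrm{Tw}(\Gamma)$, appears. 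Neither gap is a wrong turn --- both are filled in \cite{PT23} --- but as it stands the sketch asserts rather than proves the two points on which the bijection actually rests.
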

\noindent
This will play an important role in our calculation of the orbifold Euler characteristics of fine compactified universal Jacobians. To be precise, for every fine, smoothable compactified Jacobian \(\overline{J}^d(C)\) of \(C\), there is some unique stability condition \(\sigma\) on \(\Gamma(C)\) such that \(\overline{J}^d(C) = \overline{J}_{\sigma}^d(C)\).\\

\noindent
A non-degenerate {\bf polarization} \(\phi\) (see \cite{PT23}) on a stable graph \(\Gamma\) defines a stability condition \(\sigma_{\phi}\) on \(\Gamma\). We also have the notion of a non-degenerate universal polarization, which is a collection \(\phi = (\phi_{\Gamma})_{\Gamma \in G(g,n)}\) such that each \(\phi_{\Gamma}\) is a non-degenerate polarization on \(\Gamma\) and such that the \(\phi_{\Gamma}\) are compatible with graph morphisms. Every non-degenerate polarization on a graph \(\Gamma\) gives rise to a smoothable, fine compactified Jacobian on curves with dual graph \(\Gamma\). Additionally, non-degenerate universal polarizations give rise to fine compactified universal Jacobians.\\

\noindent
The fine compactified universal Jacobians \(\overline{\mathcal{J}}^d_{g,n}(\phi)\) over \(\overline{\mathcal{M}}_{g,n}\) corresponding to a universal polarization \(\phi = (\phi_{\Gamma})_{\Gamma \in G(g,n)}\) are introduced in \cite{KP19}. 

\begin{theorem}\cite[Proposition 2.9]{PT22}\\
For every non-degenerate universal polarization \(\phi = (\phi_{\Gamma})_{\Gamma \in G(g,n)}\), the moduli stack \(\overline{\mathcal{J}}^d_{g,n}(\phi)\) is a fine compactified universal Jacobian.
\end{theorem}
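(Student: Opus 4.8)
\emph{Proof proposal.} The plan is to verify the two defining properties of a fine compactified universal Jacobian directly: that $\overline{\mathcal{J}}^d_{g,n}(\phi)$ is an open substack of $Simp^d(\overline{C}_{g,n}/\overline{\mathcal{M}}_{g,n})$, and that its structure morphism to $\overline{\mathcal{M}}_{g,n}$ is proper. First I would recall the Kass--Pagani construction: $\overline{\mathcal{J}}^d_{g,n}(\phi)$ parametrizes families of $\phi$-stable rank $1$ torsion-free sheaves of degree $d$, where a sheaf $\mathcal{F}$ on a curve $C$ with dual graph $\Gamma$ is $\phi$-stable precisely when $(\Gamma(\mathcal{F}), \underline{\mathrm{deg}}(\mathcal{F})) \in \sigma_{\phi_\Gamma}$. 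Since $\phi$ is non-degenerate, $\phi$-semistability coincides with $\phi$-stability, and every $\phi$-stable sheaf is simple; hence there is a canonical monomorphism $\overline{\mathcal{J}}^d_{g,n}(\phi) \hookrightarrow Simp^d(\overline{C}_{g,n}/\overline{\mathcal{M}}_{g,n})$ whose image is the $\phi$-stable locus. It then remains to show this monomorphism is an open immersion and that the source is proper over the base.

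For openness, I would argue that $\phi$-stability is an open condition on $Simp^d$. Fiberwise over a point $[(C;p_1,\dots,p_n)]$ with dual graph $\Gamma$, the $\phi$-stable locus is exactly $\overline{J}^d_{\sigma_{\phi_\Gamma}}(C)$, which by \cite[Corollary 6.4]{PT23} is a fine compactified Jacobian and therefore an open subscheme of $Simp^d(C)$. To upgrade this to openness in families, I would note that the condition $(\Gamma(\mathcal{F}), \underline{\mathrm{deg}}(\mathcal{F})) \in \sigma_{\phi_\Gamma}$ is cut out by the finitely many stability inequalities defining $\sigma_{\phi_\Gamma}$, and that these behave upper-semicontinuously as both the sheaf and the curve degenerate. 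The compatibility of the $\phi_\Gamma$ under graph morphisms, which is exactly the universality of $\phi$, is what guarantees that these fiberwise open loci patch into a single open substack across the boundary of $\overline{\mathcal{M}}_{g,n}$.

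For properness I would verify the valuative criterion over a trait $\mathrm{Spec}(R)$ with $R$ a discrete valuation ring. Separatedness follows from non-degeneracy: with no strictly $\phi$-semistable sheaves present, a $\phi$-stable limit of a family is unique whenever it exists. For the existence half, given a line bundle on the generic fiber one must produce a $\phi$-stable rank $1$ torsion-free extension over the special fiber $C_0$. This is supplied by twisting a chosen extension by line bundles supported on components of $C_0$; condition (2) in the definition of a stability condition says that $\sigma_{\phi_{\Gamma(C_0)}}(G)$ is a minimal complete set of representatives for the twister group action, so there is exactly one such twist whose multidegree lands in the stable range, giving both existence and uniqueness.

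I expect this existence step to be the main obstacle, since it is the combinatorial heart of the Kass--Pagani properness theorem: one must show that for every degeneration the orbit of the limiting multidegree under $\mathrm{Tw}(G)$ meets the chosen set of representatives in a single point, and that the resulting twisted sheaf is genuinely $\phi$-stable for the dual graph of the special fiber rather than merely semistable. This is precisely where non-degeneracy and the minimal-complete-representative property of $\sigma_\phi$ combine, and a full argument would essentially reprove the valuative-criterion portion of \cite{KP19}.
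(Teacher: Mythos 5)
This statement is quoted verbatim from \cite[Proposition 2.9]{PT22}; the paper under review supplies no proof of its own, so there is nothing internal to compare your argument against. Judged on its own terms, your outline correctly identifies the two things that must be checked --- that the $\phi$-stable locus is open in $Simp^d(\overline{C}_{g,n}/\overline{\mathcal{M}}_{g,n})$ and that it is proper over $\overline{\mathcal{M}}_{g,n}$ --- and correctly isolates non-degeneracy (stable $=$ semistable, hence simplicity and uniqueness of limits) as the input for separatedness. But the existence half of the valuative criterion, which you yourself flag as ``the main obstacle,'' is genuinely incomplete as written, and the gap is not just one of length.

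Concretely: your existence argument produces a stable limit ``by twisting a chosen extension by line bundles supported on components of $C_0$'' and then invokes condition (2), the minimal-complete-representatives property of $\sigma_{\phi}(G)$ under $\mathrm{Tw}(G)$. This only moves the multidegree around within a \emph{fixed} stratum $(G,\underline{d})$, i.e.\ it assumes the limit sheaf is locally free (or at least that the set of nodes where it fails to be locally free is already the right one). In general the limit of a family of line bundles over a trait is forced to be non-locally-free at some nodes, and which pairs $(G,\underline{d})$ with $G \subsetneq \Gamma$ are declared stable is governed by condition (1) of the definition of a stability condition (the hull condition relating $(G,\underline{d})$ to $(G\cup\{e\},\underline{d}+\underline{e}_{v_i})$), which your sketch never uses. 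The same condition (1) is also what makes the union of the locally closed strata $\overline{J}_{(G,\underline{d})}(C)$ over $(G,\underline{d})\in\sigma$ an \emph{open} subscheme of $Simp^d(C)$ --- the individual strata are only locally closed --- so your openness paragraph, which appeals to upper-semicontinuity of ``stability inequalities,'' is also resting on the wrong mechanism, even if it happens to be salvageable in the polarization case where genuine inequalities exist. A correct proof would either carry out the Langton/Esteves-type semistable reduction in full (as in \cite{KP19}) or, as \cite{PT22} does, reduce to the results already established there; as it stands your proposal is a plan with the hard step acknowledged but not executed.
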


\noindent
There are fine compactified universal Jacobians that do not arise from universal polarizations. For example, in \cite[Section 6]{PT22}, fine compactified universal Jacobians \(\overline{\mathcal{J}}^d_{g,n}\), not arising from a universal stability condition are found for \(n \geq 6\). In fact, it is rarely the case that all fine compactified Jacobians arise from universal stability conditions.

\section{Orbifold Euler characteristics of separated, finite type, complex DM stacks}
\noindent
In this section, we recall basic properties of the orbifold Euler characteristics of finite type, separated Deligne--Mumford (DM) stacks defined over \(\mathbb{C}\) and prove Proposition \ref{constant fiber multiplicativity} which is central in our calculation.

\noindent
\begin{theorem}\label{three properties}
The orbifold Euler characteristic \(\chi_{orb}(\mathcal{M})\) of a finite type separated DM stack \(\mathcal{M}\), satisfies and is characterized by the following three properties:
\begin{enumerate}
\item[1.] If \(\bigsqcup_{i=1}^n{\mathcal{M}}_i = \mathcal{M}\) is a stratification of \(\mathcal{M}\) into finitely many locally closed substacks, then \begin{align*}\chi_{orb}(\mathcal{M}) = \sum_{i=1}^n\chi_{orb}(\mathcal{M}_i).\end{align*}
\item[2.]If \(f: \mathcal{M} \rightarrow \mathcal{N}\) is a finite, surjective, \'etale morphism of finite type separated DM stacks where \(\mathcal{N}\) is integral, then \begin{align*}\chi_{orb}(\mathcal{M}) = \mathrm{deg}(f)\chi_{orb}(\mathcal{N})\end{align*}where \(\mathrm{deg}(f)\) is defined as in \cite{Vis}.
\item[3.]If \(\mathcal{M}\) is a scheme, then \begin{align*}\chi_{orb}(\mathcal{M}) = \chi_{top}(\mathcal{M}^{an})\end{align*}
where \(\mathcal{M}^{an}\) is the scheme \(\mathcal{M}\) viewed as a topological space endowed with the complex topology from \cite[Appendix B]{Har}.
\end{enumerate}
\end{theorem}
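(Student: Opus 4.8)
The statement has two parts: that \(\chi_{orb}\) \emph{satisfies} properties (1)--(3), and that these properties \emph{characterize} it among invariants of finite type separated DM stacks. The plan is to reduce every computation to the case of a global quotient stack \([U/G]\) with \(U\) a scheme and \(G\) a finite group, on which \(\chi_{orb}([U/G]) = \chi_{top}(U^{an})/|G|\), and then to exploit the fact that every stack in our category stratifies into such pieces. The geometric input I would isolate first is a structural lemma: every finite type separated DM stack \(\mathcal{M}\) over \(\mathbb{C}\) admits a finite stratification by locally closed substacks, each isomorphic to an \emph{integral} quotient stack \([U_i/G_i]\) with \(U_i\) a scheme and \(G_i\) a finite group. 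I would obtain this by forming the coarse moduli space \(M\) via the Keel--Mori theorem, stratifying \(M\) into integral locally closed subschemes over which the stabilizer group scheme has constant order so that \(\mathcal{M}\) restricts to a gerbe, and then using the \'etale-local quotient presentation of DM stacks together with Noetherian induction to refine these gerbes into global quotients by finite groups. On such a stratum the atlas \(U_i \to [U_i/G_i]\) is a \(G_i\)-torsor, hence a finite, surjective, \'etale morphism of degree \(|G_i|\) whose target is integral.

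Granting the lemma, I would verify that the formula \(\chi_{orb}(\mathcal{M}) = \sum_i \chi_{top}(U_i^{an})/|G_i|\) satisfies (1)--(3). Property (3) is immediate, since a scheme is already a quotient by the trivial group and has only trivial automorphisms. Well-definedness of the formula, and hence property (1), I would prove by passing to a common refinement of two given stratifications; stability under refinement reduces to the additivity of \(\chi_{top}\) for complex schemes and its multiplicativity under the finite \'etale torsors \(U_i \to [U_i/G_i]\). For property (2), given a finite \'etale surjective \(f: \mathcal{M} \to \mathcal{N}\) with \(\mathcal{N}\) integral, I would stratify \(\mathcal{N}\) into quotient pieces as in the lemma and pull the stratification back along \(f\); over each piece \(f\) multiplies the \(\chi_{top}\) of atlases by \(\deg(f)\) while preserving the weighting \(1/|G_i|\), yielding \(\chi_{orb}(\mathcal{M}) = \deg(f)\,\chi_{orb}(\mathcal{N})\).

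For the characterization, let \(F\) be any invariant satisfying (1)--(3). By property (1) and the structural lemma it suffices to pin down \(F\) on a single integral quotient stratum \([U/G]\). Applying property (2) to the atlas \(f: U \to [U/G]\), whose target is integral and whose degree is \(|G|\) in the sense of \cite{Vis}, gives \(F(U) = |G|\cdot F([U/G])\); since \(U\) is a scheme, property (3) gives \(F(U) = \chi_{top}(U^{an})\), and therefore \(F([U/G]) = \chi_{top}(U^{an})/|G| = \chi_{orb}([U/G])\). Summing over the strata via property (1) shows \(F = \chi_{orb}\), so the three properties determine the invariant uniquely.

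I expect the main obstacle to be the structural lemma itself: realizing the strata as integral global quotients by finite groups, and guaranteeing that the atlas is finite \'etale of degree exactly \(|G_i|\) onto an integral base, so that property (2) applies cleanly. The delicate point is the passage from a gerbe over a scheme to a global finite-group quotient, which requires care when the band is non-abelian and is where the \'etale-local structure theory and Noetherian induction do the real work; by comparison, the independence of the defining formula from the chosen stratification is a second, more routine, technical hurdle.
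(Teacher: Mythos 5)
Your proposal is correct in its overall logic, and the skeleton --- stratify into integral pieces admitting finite \'etale scheme covers, define \(\chi_{orb}\) as the weighted sum \(\sum_i \chi_{top}(U_i^{an})/\deg(f_i)\), check well-definedness by common refinement, and derive uniqueness by applying properties (2) and (3) to the covers and property (1) to the strata --- is exactly the paper's. The genuine difference lies in the structural input. You insist on realizing each stratum as an integral global quotient \([U_i/G_i]\) by a finite group, to be extracted from the coarse moduli space, a stratification by stabilizer order, and a gerbe-to-global-quotient step; this is essentially Kresch's stratification theorem for Deligne--Mumford stacks, and the non-abelian-band passage you flag as the main obstacle is indeed the hard part of that route. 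The paper sidesteps it entirely: it only needs each stratum to admit \emph{some} finite, surjective, \'etale morphism from a scheme (not a torsor under a group), which it gets from Alper's theorem that a separated finite type DM stack has a finite, surjective, \emph{generically} \'etale cover by a scheme, combined with Noetherian induction to shrink to the \'etale locus (Lemma~\ref{stratification lemma}). Since the degree in the sense of Vistoli plays the role of \(|G|\) in both the defining formula and the uniqueness argument, nothing is lost by weakening the covers from torsors to arbitrary finite \'etale morphisms, and the paper's route buys you a much cheaper structural lemma at no cost to the argument. Conversely, your global-quotient presentation would make the well-definedness check slightly more concrete (everything is a literal \(\chi_{top}(U)/|G|\)), but it is heavier machinery than the statement requires. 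One caveat in your favour: the paper itself only sketches existence (deferring to ``well known'') and the independence of the formula from the chosen stratification, so either route, if written out, would be supplying detail the paper omits.
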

\noindent
The existence of an orbifold Euler characteristic satisfying these properties is well known within the mathematical community. Once existence of such a definition has been shown, uniqueness follows from its properties.

\begin{remark}
In this paper, when we speak of locally closed substacks of \(\mathcal{M}\), we mean locally closed subsets with respect to the underlying topology of \(\mathcal{M}\) with the unique reduced structure. Moreover, the compliment of a locally closed substack is a locally closed substack in the same way.
\end{remark}

\begin{theorem} \cite[ Theorem 4.5.1]{Alp} \label{finite morphism from a scheme}\\
If \(\mathcal{M}\) is a separated, finite type DM stack over a noetherian scheme, there exists a finite, surjective, generically \'etale  morphism \(Z \rightarrow \mathcal{M}\) from a scheme \(Z\).
\end{theorem}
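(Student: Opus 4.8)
The plan is to construct the cover by Noetherian induction on $\dim \mathcal{M}$, producing a suitable finite morphism over a dense open substack and then absorbing the closed complement. Since the assertion concerns the underlying topology and the generic étale locus, I would first replace $\mathcal{M}$ by $\mathcal{M}_{red}$ and stratify into integral substacks, so that it suffices to build a finite surjective generically étale scheme cover of an integral $\mathcal{M}$ and take disjoint unions. Throughout I would exploit that $\mathcal{M}$, being separated and DM, has unramified and proper --- hence finite --- diagonal; consequently its inertia is finite and every residual gerbe is banded by a finite group.

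The geometric heart is the generic step. At the generic point $\xi$ of the integral stack $\mathcal{M}$, the residual gerbe $\mathcal{G}_\xi$ is a gerbe over the residue field $\kappa(\xi)$ banded by the finite automorphism group $G_\xi$, and such a gerbe is neutralised by a finite separable extension $L/\kappa(\xi)$. A neutralising section is a finite morphism $\mathrm{Spec}\,L \to \mathcal{G}_\xi$, and composing with the immersion $\mathcal{G}_\xi \hookrightarrow \mathcal{M}$ yields a representable, quasi-finite morphism $\mathrm{Spec}\,L \to \mathcal{M}$ hitting $\xi$ with trivial stabiliser. By a standard spreading-out argument this extends to a scheme $Z_0$ finite over a dense open $\mathcal{U} \subseteq \mathcal{M}$ and étale over a possibly smaller dense open.

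To globalise I would extend $Z_0 \to \mathcal{U}$ to a finite morphism defined on all of $\mathcal{M}$ by forming the normalisation $\overline{Z}$ of $\mathcal{M}$ in the total fraction ring of $Z_0$; this is finite, using that the noetherian base is Nagata so that normalisation of a finite-type integral stack is finite, and it is generically étale onto $\mathcal{M}$. The point is that, because the chosen generic map was representable, the generic stabiliser of $\overline{Z}$ is trivial, so $\overline{Z}$ is a normal DM stack that is generically a scheme. Over a dense open $\mathcal{V} \subseteq \overline{Z}$ I therefore already have a scheme, while the closed complement $\overline{Z} \setminus \mathcal{V}$ has strictly smaller dimension and, by the inductive hypothesis, admits a finite surjective generically étale scheme cover. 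Composing the resulting cover of $\overline{Z}$ with the finite morphism $\overline{Z} \to \mathcal{M}$ gives the desired $Z \to \mathcal{M}$.

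The main obstacle is precisely this last globalisation: extending a finite cover defined over a dense open to a finite (that is, proper and quasi-finite) morphism over the entire stack, while keeping the total space a scheme rather than a stack. Controlling properness is the delicate part, and I expect it is cleanest to compactify the scheme cover over the coarse moduli space supplied by the Keel--Mori theorem and then extract the finite part via Zariski's Main Theorem. The induction must be organised on both $\dim \mathcal{M}$ and the order of the generic stabiliser, so that each normalisation step genuinely reduces the generic stabiliser and the process terminates with an honest scheme.
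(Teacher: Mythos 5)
The paper offers no proof of this statement: it is imported verbatim from \cite[Theorem 4.5.1]{Alp} (Gabber's lemma; see also Laumon--Moret-Bailly, Th\'eor\`eme 16.6), so there is no internal argument to compare yours against. Judged on its own terms, your sketch reproduces the standard strategy of the published proofs --- reduction to the reduced, integral case, neutralisation of the residual gerbe at the generic point by a finite separable extension, spreading out to a finite cover of a dense open, and noetherian induction on the complement --- and the individual ingredients you invoke (finiteness of the diagonal, Keel--Mori, Zariski's Main Theorem) are the correct ones.

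There is, however, a genuine gap exactly where you suspect one. The stack $\overline{Z}$ obtained by normalising $\mathcal{M}$ in the total fraction ring of $Z_0$ is finite and representable over $\mathcal{M}$, but it is a priori a stack, not a scheme: representability only forces the stabilisers of $\overline{Z}$ to inject into those of $\mathcal{M}$, and triviality of the \emph{generic} stabiliser gives you a scheme only over a dense open $\mathcal{V}\subseteq\overline{Z}$. The proposed conclusion --- cover $\overline{Z}\setminus\mathcal{V}$ by a scheme via the inductive hypothesis and ``compose'' --- does not yield a finite morphism, because $\mathcal{V}\hookrightarrow\overline{Z}$ is an open immersion and hence not proper, so $\mathcal{V}\sqcup W\to\overline{Z}$ is surjective but not finite. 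This is precisely the globalisation problem you name in your final paragraph, and it is the actual content of the theorem: one must extend the scheme cover of the dense open to a morphism that is simultaneously finite over all of $\mathcal{M}$ and still has a scheme as its source. The published proofs resolve this with further devices (for instance working \'etale-locally over the coarse space, where $\mathcal{M}$ becomes a quotient $[U/G]$ by a finite group, together with a more careful induction), which your sketch gestures at but does not carry out. So the proposal is a faithful outline with the decisive step missing, not a complete proof; for the purposes of this paper the statement should simply remain a citation.
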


\begin{lemma}\label{stratification lemma}
A DM stack \(\mathcal{M}\) which is finite type and separated over \(\mathbb{C}\) has a stratification by finitely many locally closed substacks \(\mathcal{U}_i\) such that \(\mathcal{M} = \bigsqcup_{i=0}^n\mathcal{U}_i\) with each of the \(\mathcal{U}_i\) covered by a finite, surjective, \'etale morphism from a scheme.
\end{lemma}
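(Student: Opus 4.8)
The plan is to argue by induction on the dimension of $\mathcal{M}$, peeling off a dense open substack at each stage using Theorem \ref{finite morphism from a scheme}. Since $\mathcal{M}$ is Noetherian it has finite dimension and only finitely many irreducible components, so such an induction is well-founded; the base case is the empty stack, for which the statement is vacuous, and more generally a stack of dimension $0$ is covered directly by Theorem \ref{finite morphism from a scheme}.

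For the inductive step I would apply Theorem \ref{finite morphism from a scheme} to obtain a finite, surjective, generically \'etale morphism $f\colon Z \to \mathcal{M}$ from a scheme $Z$, and let $\mathcal{U}_0 \subseteq \mathcal{M}$ be the locus over which $f$ is \'etale. Checking \'etaleness on a smooth atlas of $\mathcal{M}$ and using that the \'etale locus of a finite (hence finitely presented) morphism of Noetherian schemes is open, one sees that $\mathcal{U}_0$ is an open substack; working over $\mathbb{C}$, and hence in characteristic $0$, generic separability of finite dominant morphisms ensures that $f$ is \'etale over a dense open of each irreducible component, so $\mathcal{U}_0$ is moreover dense. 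The restriction $f^{-1}(\mathcal{U}_0) \to \mathcal{U}_0$ is then finite, surjective and \'etale, and $f^{-1}(\mathcal{U}_0)$ is a scheme, being an open subscheme of $Z$; thus $\mathcal{U}_0$ is a stratum of the required type. Its complement $\mathcal{V} = \mathcal{M} \setminus \mathcal{U}_0$, endowed with its reduced structure, is a closed substack that is again separated and of finite type over $\mathbb{C}$, and because $\mathcal{U}_0$ meets every irreducible component in a dense open we have $\dim \mathcal{V} < \dim \mathcal{M}$.

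By the inductive hypothesis, $\mathcal{V}$ admits a finite stratification $\mathcal{V} = \bigsqcup_{i=1}^{n}\mathcal{U}_i$ of the desired form, and adjoining $\mathcal{U}_0$ yields $\mathcal{M} = \bigsqcup_{i=0}^{n}\mathcal{U}_i$, completing the induction; if desired, one may afterwards subdivide each $\mathcal{U}_i$ into integral locally closed substacks, which is the form needed to invoke property (2) of Theorem \ref{three properties}. The step I expect to be the crux is the claim that $\mathcal{U}_0$ is both open and dense: openness is a descent argument along an atlas combined with openness of the \'etale locus for finite morphisms of schemes, while density is precisely the content of ``generically \'etale'' and relies on characteristic $0$, so that no inseparability obstructs \'etaleness at the generic points. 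Everything else—finiteness of the stratification, the strict drop in dimension that forces termination, and the fact that a reduced closed substack inherits separatedness and finite type over $\mathbb{C}$—is routine.
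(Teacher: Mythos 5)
Your proof is correct and follows essentially the same strategy as the paper: use Theorem \ref{finite morphism from a scheme} to produce a finite, surjective, generically \'etale cover by a scheme, restrict to the dense open \'etale locus to obtain one stratum of the required type, and induct on the closed complement (the paper phrases the induction as noetherian induction rather than induction on dimension, and additionally shrinks to the smooth locus so that the strata can be taken integral, which is what property (2) of Theorem \ref{three properties} requires later; your closing remark about refining to integral strata covers the same point). No gaps.
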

\begin{proof}
    Separated DM stacks, of finite type over \(\mathbb{C}\) are noetherian. Therefore, we may argue by noetherian induction. There is some atlas \(U \rightarrow \mathcal{M}\) where \(U\) is a reduced, separated, finite type scheme over \(\mathbb{C}\). This scheme \(U\) contains a dense, smooth open subscheme \(U_0\). The image of \(U_0\) under \(U \rightarrow \mathcal{M}\) is a nonempty open, smooth substack \(\mathcal{U}_0\) of \(\mathcal{M}\). Then applying Lemma \ref{stratification lemma}, shrinking \(\mathcal{U}_0\) if necessary, we may assume \(\mathcal{U}_0\) is smooth and that there is some finite, surjective, \'etale morphism from a scheme \(U\) to \(\mathcal{U}_0\) by Theorem \ref{finite morphism from a scheme}. We may further assume that \(\mathcal{U}_0\) is integral by taking a connected component.\\

    \noindent
    Let \(\mathcal{V}_0 = \mathcal{M}\setminus\mathcal{U}_0\). If \(\mathcal{V}_{0} \neq \emptyset\), repeat the above process to find a nonempty open substack \(\mathcal{U}_1\) of \(\mathcal{V}_0\) with the property that \(\mathcal{U}_1\) has a finite, surjective, \'etale cover by a scheme. Set \(\mathcal{V}_1 = \mathcal{V}_0\setminus\mathcal{U}_0\). Continue inductively until \(\mathcal{V}_i = \emptyset\). This process must terminate after finitely many iterations since we obtain a chain of closed subsets \(\mathcal{V}_0 \supsetneq \mathcal{V}_1 \supsetneq ...\) of \(\mathcal{M}\). Therefore, using the fact that \(\mathcal{M}\) is noetherian, there must be some finite \(n\) such that \(\mathcal{V}_n = \emptyset\). We obtain \(\mathcal{M} = \bigsqcup_{i=0}^n\mathcal{U}_i\) where the \(\mathcal{U}_i\) are locally closed, integral substacks of \(\mathcal{M}\), each having a finite, surjective \'etale morphism from a scheme.
    \end{proof}

\begin{remark}
    The smoothness at the beginning of the proof of Lemma \ref{stratification lemma} was only introduced so that we could simultaneously obtain openess and integrality of \(\mathcal{U}_0\) in \(\mathcal{M}\). The openess is needed to proceed by noetherian induction. In general, an open subset of an integral component is not open. However, for smooth DM stacks, integral components are connected components, so are open.
\end{remark}
\begin{remark}
    An explicit construction of orbifold Euler characteristics for finite type, separated DM stacks over \(\mathbb{C}\) can be defined as follows. Take a locally closed stratification of \(\mathcal{M} = \bigsqcup_{i = 1}^n\mathcal{U}_i\) as in Lemma \ref{stratification lemma} where there is a finite, surjective, \'etale morphism \(U_i \xrightarrow{f_i} \mathcal{U}_i\) from some scheme \(U_i\) to \(\mathcal{U}_i\) for each \(i\). We can define
    \begin{align*}
    \chi_{orb}(\mathcal{M}) = \sum_{i = 1}^n\frac{\chi_{top}(U_i^{an})}{\mathrm{deg}(f_i)}.
    \end{align*}
    This can be shown to be independent of the choice of stratification and of the \(f_i\) once such a stratification has been chosen. The properties in Theorem \ref{three properties} can easily be seen to follow from this definition.
\end{remark}

\begin{theorem}\label{top fibration theorem} \cite[Theorem 1, p 481]{Es}\\
If \(p: E \rightarrow B\) is an orientable topological fibration with fiber \(F\) and \(B\) is path-connected, then \(\chi_{top}(E) = \chi_{top}(F)\chi_{top}(B)\).\end{theorem}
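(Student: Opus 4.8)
The plan is to deduce the multiplicativity of the Euler characteristic from the Leray--Serre spectral sequence of the fibration $p \colon E \to B$, working with cohomology in a field $k$ (say $k = \mathbb{Q}$). Recall that this spectral sequence has second page $E_2^{s,t} = H^s(B; \mathcal{H}^t(F))$, where $\mathcal{H}^t(F)$ is the local coefficient system on $B$ with stalk $H^t(F; k)$ and monodromy given by the $\pi_1(B)$-action on the cohomology of the fiber, and it converges to $H^{s+t}(E; k)$. The orientability hypothesis is precisely the statement that this $\pi_1(B)$-action is trivial, so that $\mathcal{H}^t(F)$ is the constant system with fiber $H^t(F;k)$ and hence $E_2^{s,t} = H^s(B; k) \otimes_k H^t(F; k)$.

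First I would check that the Euler characteristic of the $E_2$ page already equals the desired product. Writing $b_i(X) = \dim_k H^i(X; k)$, the tensor-product description gives $\dim_k E_2^{s,t} = b_s(B)\, b_t(F)$, so
\[
\chi(E_2) \;=\; \sum_{s,t} (-1)^{s+t} b_s(B)\, b_t(F) \;=\; \Big(\sum_s (-1)^s b_s(B)\Big)\Big(\sum_t (-1)^t b_t(F)\Big) \;=\; \chi_{top}(B)\,\chi_{top}(F).
\]
Next I would use the fact that passing from one page to the next is taking the homology of a complex, and that taking homology of a cochain complex of finite-dimensional vector spaces preserves the alternating sum of dimensions: since each differential $d_r$ raises the total degree $s+t$ by one, $\chi(E_r)$ is independent of $r$. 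In particular $\chi(E_\infty) = \chi(E_2) = \chi_{top}(B)\,\chi_{top}(F)$. Finally, because $E_\infty^{s,t}$ is the associated graded of a finite filtration of $H^{s+t}(E; k)$, its total Euler characteristic equals $\chi_{top}(E)$, and the theorem follows.

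The main points requiring care are the finiteness and convergence hypotheses that make these Euler characteristics well-defined: one needs $E$, $B$ and $F$ to have finite total Betti numbers so that the sums above are finite, and one needs the Leray--Serre spectral sequence to converge, which holds for fibrations over a reasonable base (for instance a connected CW base, or more generally whenever the Serre sequence applies). Path-connectedness of $B$ is used to make the fiber $F$ well-defined up to homotopy, and orientability is exactly what is needed to replace the twisted coefficient system by constant coefficients in the identity $E_2^{s,t} = H^s(B;k)\otimes_k H^t(F;k)$; without it one only obtains $\chi(E_2) = \chi\big(H^*(B;\mathcal{H}^*(F))\big)$, which need not factor as a product.
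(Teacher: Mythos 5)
This statement is quoted in the paper as an external result (cited to [Es, Theorem 1, p.~481]) and no proof is given there, so there is no in-paper argument to compare against; your spectral-sequence proof is the standard one and is essentially correct. The chain of steps --- identify $E_2^{s,t}=H^s(B;\mathcal{H}^t(F))$, use orientability (trivial $\pi_1(B)$-action) plus field coefficients to get $E_2^{s,t}\cong H^s(B;k)\otimes_k H^t(F;k)$, observe that $\chi(E_2)=\chi_{top}(B)\chi_{top}(F)$, that $\chi$ is invariant under passing to homology of each page, and that $\chi(E_\infty)=\chi_{top}(E)$ via the finite filtration --- is exactly how this multiplicativity is usually established, and you correctly flag the hypotheses that make it work (finite total Betti numbers of $B$ and $F$, hence of each page, and convergence of the Serre spectral sequence, e.g.\ over a CW or paracompact base). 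These finiteness assumptions are implicit in the theorem's statement, since otherwise $\chi_{top}$ is undefined, and they hold in all the applications in this paper (complex algebraic varieties and strata of moduli spaces). One small quibble with your closing remark: orientability is not in fact ``exactly what is needed,'' since for any local system $\mathcal{L}$ on a finite CW complex $B$ one has $\chi(B;\mathcal{L})=\chi_{top}(B)\cdot\operatorname{rank}(\mathcal{L})$ (compute with the cellular cochain complex with local coefficients), so the conclusion $\chi(E_2)=\chi_{top}(B)\chi_{top}(F)$ and hence the theorem survive without the orientability hypothesis over such a base; but this only means the theorem as stated is slightly weaker than what is true, and does not affect the validity of your proof.
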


\begin{theorem}\cite[Section 1.2]{Zei}\label{proper, surjective implies fibration}
Given a proper, surjective morphism of schemes \(X \rightarrow Y\). There is a finite, locally closed stratification \(Y = \bigsqcup_{i=1}^m Y_i\) of the base such that \(X|_{Y_i} = X\times_YY_i \rightarrow Y_i\) is a fibration with respect to the complex topology.
\end{theorem}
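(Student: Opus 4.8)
The plan is to reduce the algebraic statement to the complex-analytic category and then apply the Thom--Mather theory of stratified maps. First I would replace the morphism of schemes $X \to Y$ by the induced morphism of associated complex-analytic spaces $X^{an} \to Y^{an}$. Since $f$ is proper as a morphism of schemes, $f^{an}$ is proper in the complex topology, and properness is preserved under base change, so each restriction $X \times_Y Y_i \to Y_i$ will again be proper. Because the desired conclusion is purely topological, I would also pass to the underlying reduced schemes, which carry the same analytic topology, so that nilpotents cause no trouble.

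The central input is the existence of a Whitney stratification of the map. I would invoke the theorem (due to Thom, Mather, Hironaka and Verdier) that for a morphism of complex algebraic varieties there exist finite stratifications of $X$ and of $Y$ into locally closed, smooth, Whitney-regular subvarieties $\{S_\alpha\}$ and $\{Y_i\}$ such that $f$ carries each stratum $S_\alpha$ submersively onto a stratum $Y_i$, and such that each preimage $f^{-1}(Y_i)$ is a union of strata $S_\alpha$. Finiteness is automatic since algebraic varieties are noetherian, and the $Y_i$ are by construction locally closed algebraic subsets, which yields the required stratification $Y = \bigsqcup_{i=1}^m Y_i$.

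Over a fixed stratum $Y_i$, the restriction $f^{-1}(Y_i) \to Y_i$ is then a proper map of stratified spaces that restricts to a submersion on every stratum of the source. I would apply Thom's first isotopy lemma, which asserts precisely that such a proper stratified submersion is a locally trivial topological fiber bundle over $Y_i$ (after refining $Y_i$ into its connected components if one wants a genuinely constant fiber, which keeps the stratification finite and locally closed). A locally trivial fiber bundle satisfies the homotopy lifting property and is in particular a fibration in the complex topology, which is the desired conclusion. Observing that $X|_{Y_i} = X \times_Y Y_i$ and $f^{-1}(Y_i)$ agree as topological spaces completes the argument.

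The main obstacle is the construction of the compatible Whitney stratification of the map together with the verification of the Whitney regularity conditions along the strata; this is the technical heart, and it is exactly what guarantees that the hypotheses of Thom's first isotopy lemma are satisfied. Once a stratified submersion is in hand, the isotopy lemma delivers local triviality essentially formally. A secondary and more routine point is ensuring that everything can be arranged algebraically and with finitely many strata, which follows from noetherianity and from the algebraicity of the strata produced by the stratification theorem.
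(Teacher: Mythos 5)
The paper does not prove this statement but cites it directly from \cite{Zei}, and your argument is precisely the standard proof of that cited result: analytify, take a Whitney stratification of the map (Thom--Mather--Hironaka--Verdier), and apply Thom's first isotopy lemma to the proper stratified submersion over each stratum to get a locally trivial bundle, hence a fibration. This is correct (with the implicit standing hypothesis that the schemes are of finite type over \(\mathbb{C}\), so that analytification and the algebraic stratification theorem apply), and your remarks on reducedness, finiteness via noetherianity, and refining to connected components are exactly the right bookkeeping.
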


\begin{proposition}\label{constant fiber multiplicativity}
Suppose \(\mathcal{M} \rightarrow \mathcal{N}\) is a proper, surjective, representable morphism of separated, finite type DM stacks such that the topological Euler characteristic of the fiber of any geometric point  of \(\mathcal{N}\) is independent of the choice of geometric point. Then
\begin{align*}
\chi_{orb}(\mathcal{M}) = \chi_{top}(F^{an})\chi_{orb}(\mathcal{N})
\end{align*}
where \(F\) is the fiber over any geometric point of \(\mathcal{N}\).
\end{proposition}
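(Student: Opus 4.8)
The plan is to use the characterization of $\chi_{orb}$ by the three properties of Theorem \ref{three properties} to reduce the assertion to a statement about proper surjective morphisms of \emph{schemes}, and then to feed this into the topological fibration results. First I would stratify the base: by Lemma \ref{stratification lemma} write $\mathcal{N} = \bigsqcup_i \mathcal{U}_i$ into finitely many integral, locally closed substacks, each admitting a finite, surjective, \'etale morphism $f_i \colon U_i \to \mathcal{U}_i$ from a scheme $U_i$. Pulling $\mathcal{M} \to \mathcal{N}$ back along the inclusions gives a locally closed stratification $\mathcal{M} = \bigsqcup_i \mathcal{M}_{\mathcal{U}_i}$, where $\mathcal{M}_{\mathcal{U}_i} := \mathcal{M} \times_{\mathcal{N}} \mathcal{U}_i$. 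By additivity (property 1) we get $\chi_{orb}(\mathcal{N}) = \sum_i \chi_{orb}(\mathcal{U}_i)$ and $\chi_{orb}(\mathcal{M}) = \sum_i \chi_{orb}(\mathcal{M}_{\mathcal{U}_i})$, so since the fiber Euler characteristic is globally constant equal to $\chi_{top}(F^{an})$, it suffices to prove $\chi_{orb}(\mathcal{M}_{\mathcal{U}_i}) = \chi_{top}(F^{an})\,\chi_{orb}(\mathcal{U}_i)$ for each $i$.

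Next I would pass to schemes by base change along $f_i$. Because $\mathcal{M} \to \mathcal{N}$ is representable and $U_i$ is a scheme, $M_i := \mathcal{M}_{\mathcal{U}_i} \times_{\mathcal{U}_i} U_i$ is a scheme; the projection $M_i \to \mathcal{M}_{\mathcal{U}_i}$ is finite, surjective, \'etale of degree $\deg(f_i)$, while $M_i \to U_i$ is proper and surjective, both being base changes. I need two identities. The first is $\chi_{orb}(\mathcal{M}_{\mathcal{U}_i}) = \chi_{top}(M_i^{an})/\deg(f_i)$: to prove it, stratify $\mathcal{M}_{\mathcal{U}_i}$ into integral locally closed substacks $\mathcal{W}_k$, note that the preimages $M_{i,k} \subset M_i$ are locally closed subschemes that are finite, surjective, \'etale of degree $\deg(f_i)$ over $\mathcal{W}_k$, so property 2 together with property 3 (for the scheme $M_{i,k}$) gives $\chi_{orb}(\mathcal{W}_k) = \chi_{top}(M_{i,k}^{an})/\deg(f_i)$; summing and using that $\chi_{top}$ is additive over the locally closed stratification $\{M_{i,k}\}$ of the scheme $M_i$ (itself a consequence of properties 1 and 3) yields the identity. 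The same argument applied to $f_i$ itself gives $\chi_{orb}(\mathcal{U}_i) = \chi_{top}(U_i^{an})/\deg(f_i)$. The second identity is $\chi_{top}(M_i^{an}) = \chi_{top}(F^{an})\,\chi_{top}(U_i^{an})$, handled below. Granting both, $\chi_{orb}(\mathcal{M}_{\mathcal{U}_i}) = \chi_{top}(F^{an})\,\chi_{top}(U_i^{an})/\deg(f_i) = \chi_{top}(F^{an})\,\chi_{orb}(\mathcal{U}_i)$, which closes the reduction.

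For the fibration step I would apply Theorem \ref{proper, surjective implies fibration} to the proper surjective morphism of schemes $M_i \to U_i$, obtaining a finite locally closed stratification $U_i = \bigsqcup_l Y_l$ over which $M_i|_{Y_l} \to Y_l$ is a topological fibration; after refining I may assume each $Y_l$ is connected, hence path-connected in the analytic topology. Each fiber of $M_i \to U_i$ over a geometric point is, by \'etale base change, isomorphic to a geometric fiber of $\mathcal{M} \to \mathcal{N}$, and so has Euler characteristic $\chi_{top}(F^{an})$. Applying Theorem \ref{top fibration theorem} to each stratum and summing (again by additivity of $\chi_{top}$ over locally closed strata) gives $\chi_{top}(M_i^{an}) = \chi_{top}(F^{an}) \sum_l \chi_{top}(Y_l^{an}) = \chi_{top}(F^{an})\,\chi_{top}(U_i^{an})$.

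The main obstacle is the application of Theorem \ref{top fibration theorem}, which requires each fibration $M_i|_{Y_l} \to Y_l$ to be \emph{orientable}, whereas Theorem \ref{proper, surjective implies fibration} only produces topological fibrations. I expect to dispatch this using the complex-algebraic nature of the situation, where the fibers carry canonical orientations preserved by the monodromy; alternatively, one observes that the multiplicativity $\chi_{top}(E) = \chi_{top}(F)\,\chi_{top}(B)$ holds for any fibration over a path-connected finite-$CW$ base regardless of orientability, since the Euler characteristic of the base with coefficients in the cohomology local system of the fiber depends only on the rank of that local system. Care is also needed to guarantee that the base strata can be taken path-connected and that the constancy of the fiber Euler characteristic is genuinely preserved under the \'etale base change along $f_i$; both of these are routine but should be stated explicitly.
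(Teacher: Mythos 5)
Your proposal is correct and follows essentially the same route as the paper: stratify $\mathcal{N}$ into integral strata with finite surjective \'etale scheme covers, base change $\mathcal{M}\to\mathcal{N}$ along these covers to get a proper surjective morphism of schemes, apply Theorem \ref{proper, surjective implies fibration} and Theorem \ref{top fibration theorem} stratum by stratum, and transfer back to orbifold Euler characteristics by dividing by the degree. You are in fact slightly more careful than the paper, which silently applies Theorem \ref{top fibration theorem} without addressing the orientability hypothesis; your observation that multiplicativity of $\chi_{top}$ holds for any fibration over a path-connected base (via the local system of fiber cohomology) cleanly closes that small gap.
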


\begin{proof}
By property \(1\) of Theorem \ref{three properties} and Lemma \ref{stratification lemma}, it suffices to consider the case where \(\mathcal{N}\) is integral and has a finite, surjective, \'etale morphism \(U \xrightarrow{f}\mathcal{N}\) from a scheme.\\

\noindent
We obtain the following cartesian diagram
\[
\begin{tikzcd}
\mathcal{M}\times_{\mathcal{N}} U \arrow[r] \arrow[d,swap,"g"] &
 U \arrow[d, "f"] \\
\mathcal{M} \arrow[r] & \mathcal{N}
\end{tikzcd}
\]
We take a locally closed stratification \(U = \bigsqcup_{j=1}^n U_j\) as in Theorem \ref{proper, surjective implies fibration}. Taking connected components, we may further assume that they are connected. Letting \(F_{j}\) be a fiber of \(\mathcal{M}|_{U_j} \rightarrow U_j\) by Lemma \ref{proper, surjective implies fibration}, we obtain
\begin{align*}
    \chi_{top}((\mathcal{M}\times_{\mathcal{N}}U|_{U_j})^{an}) = \chi_{top}(F_{j}^{an})\chi_{top}(U_j^{an}).
\end{align*}
Using the fact that \(F_{j}\) is a fiber of \(\mathcal{M} \rightarrow \mathcal{N}\) and writing \(F\) for any general fiber of this morphism,
\begin{align*}
    \chi_{top}(F^{an}) = \chi_{top}(F_{j}^{an}).
\end{align*}
By Lemma \ref{stratification lemma}, we can obtain a locally closed stratification of \(\mathcal{M}\) into finitely many locally closed, integral substacks. Using the fact that over each integral substack \(\mathcal{M}' \subset \mathcal{M}\), \(g|_{\mathcal{M}'}\) is a finite, surjective \'etale morphism of degree \(\mathrm{deg}(f)\), by properties 1 and 3 of Theorem \ref{three properties},
\begin{align*}
\chi_{orb}(\mathcal{M}) = \frac{\chi_{orb}(\mathcal{M}\times_{\mathcal{N}}U)}{\mathrm{deg}(f)} = \frac{\chi_{top}((\mathcal{M}\times_{\mathcal{N}}U)^{an})}{\mathrm{deg}(f)}. 
\end{align*}
We compute
\begin{align*}
    \chi_{orb}(\mathcal{M}) = \frac{\chi_{top}((\mathcal{M}\times_{\mathcal{N}}U)^{an})}{\mathrm{deg}(f)}\\=\frac{1}{\mathrm{deg}(f)}\sum_{j=1}^n\chi_{top}((\mathcal{M}\times_{\mathcal{N}}U|_{U_j})^{an})\\
    = \frac{1}{\mathrm{deg}(f)}\sum_{j=1}^n\chi_{top}(U_j^{an})\chi_{top}(F^{an}) \\= \chi_{top}(F^{an})\frac{\chi_{top}(U^{an})}{\mathrm{deg}(f)} = \chi_{top}(F^{an})\chi_{orb}(\mathcal{N}).
\end{align*}
\end{proof}

\section{Proof of the main result}
\noindent
If \(\overline{\mathcal{J}}^d_{g,n}\) is a fine compactified universal Jacobian over \(\overline{\mathcal{M}}_{g,n}\), then the fiber \(\overline{J}^d(C)\) over any point \([(C;p_1,...,p_n)]\) of \(\overline{\mathcal{M}}_{g,n}\) is a degree \(d\), fine smoothable compactified Jacobian of \(C\). The smoothability of the fibers \(\overline{J}^d(C)\) follows from the fact that the morphism \(\overline{\mathcal{J}}^d_{g,n}\rightarrow \overline{\mathcal{M}}_{g,n}\) is smooth over the open, dense substack \(\mathcal{M}_{g,n}\) of \(\overline{\mathcal{M}}_{g,n}\). Therefore, every fiber \(\overline{J}^d(C)\) over a point \([(C;p_1,...,p_n)]\) is of the form \(\overline{J}^d(C) = \overline{J}^d_{\sigma}(C)\) for some stability condition \(\sigma\) on \(\Gamma(C)\).\\

\noindent
\begin{lemma}\cite[Lemma 1.5]{Alex}\label{line bundle lemma 1}
A rank 1, torsion free, simple sheaf \(\mathcal{F}\) of a nodal curve \(C\) is the direct image \(v_*\mathcal{F}'\) under some partial normalization \(v: C' \rightarrow C\) at the set of nodes where \(\mathcal{F}\) fails to be locally free for some unique line bundle \(\mathcal{F}'\) on \(C'\).
\end{lemma}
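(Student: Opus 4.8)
The plan is to reduce the statement to a purely local computation at the non-locally-free nodes and then to reglue using the finiteness of the partial normalization. Write $S = S(\mathcal{F})$ for the set of nodes at which $\mathcal{F}$ fails to be locally free and let $v \colon C' \to C$ be the partial normalization at $S$. Away from $S$ there is nothing to prove: there $v$ is an isomorphism and $\mathcal{F}$ is already a line bundle, so any candidate $\mathcal{F}'$ is forced and is automatically locally free. Thus everything is concentrated at a node $p \in S$, where I may pass to the completed local ring $R := \widehat{\mathcal{O}}_{C,p} \cong \mathbb{C}[[x,y]]/(xy)$ with normalization $\tilde{R} = \mathbb{C}[[x]] \times \mathbb{C}[[y]]$.

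The first key step is the local classification of rank $1$ torsion-free modules over the node: every such $R$-module is isomorphic either to $R$ itself or to the maximal ideal $\mathfrak{m} = (x,y)$, and since $\mathcal{F}$ is by assumption not locally free at $p$ we must have $\widehat{\mathcal{F}}_p \cong \mathfrak{m}$. I would then verify by an explicit calculation that $\mathfrak{m} \cong \tilde{R}$ as $R$-modules, via the map $(f,g) \mapsto x f + y g$, which identifies $\tilde{R}$ with $x\mathbb{C}[[x]] \oplus y\mathbb{C}[[y]] = \mathfrak{m}$ and is readily checked to be $R$-linear. In particular $\mathfrak{m}$ carries a natural $\tilde{R}$-module structure under which it is free of rank $1$, and $\operatorname{End}_R(\mathfrak{m}) \cong \tilde{R}$. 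Globalizing, this identifies $v_*\mathcal{O}_{C'}$ with $\mathcal{E}nd_{\mathcal{O}_C}(\mathcal{F})$ as a sheaf of $\mathcal{O}_C$-algebras, so $C'$ is intrinsically recovered from $\mathcal{F}$, and $\mathcal{F}$ is canonically a module over $\mathcal{A} := v_*\mathcal{O}_{C'}$.

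With the local picture in hand, the globalization is formal. Since $v$ is finite, hence affine, the functor $v_*$ is an equivalence between quasi-coherent $\mathcal{O}_{C'}$-modules and quasi-coherent $\mathcal{A}$-modules on $C$; I would take $\mathcal{F}'$ to be the $\mathcal{O}_{C'}$-module corresponding to $\mathcal{F}$ under this equivalence (concretely, the maximal torsion-free quotient of $v^\ast \mathcal{F}$), so that the adjunction map $\mathcal{F} \to v_* \mathcal{F}'$ is an isomorphism; this may be checked stalk by stalk, where it reduces to the identification $\mathfrak{m} \cong \tilde{R}$ above and to the trivial isomorphism away from $S$. It then remains to see that $\mathcal{F}'$ is a line bundle. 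It is torsion-free because $v_*$ is exact and faithful and carries torsion to torsion, so any torsion subsheaf of $\mathcal{F}'$ would push forward to a torsion subsheaf of the torsion-free sheaf $\mathcal{F}$; and $C'$ is smooth at the two points lying over each $p \in S$, while $\mathcal{F}$, and hence $\mathcal{F}'$, is already locally free at every remaining node, so $\mathcal{F}'$ is locally free of rank $1$ everywhere. Uniqueness follows from the faithfulness of $v_*$: an isomorphism $v_*\mathcal{F}' \cong v_*\mathcal{F}''$ descends to $\mathcal{F}' \cong \mathcal{F}''$ under the equivalence.

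I expect the main obstacle to be the local classification of rank $1$ torsion-free modules over the node, namely the statement that $R$ and $\mathfrak{m}$ are the only indecomposables and that the non-free one is $\mathfrak{m}$; once this and the identification $\mathfrak{m} \cong \tilde{R}$ are in place, the passage from the local to the global statement is a routine consequence of the finiteness of $v$ and the exactness of $v_*$. I would note finally that the simplicity hypothesis on $\mathcal{F}$ plays no role in the existence or uniqueness of $\mathcal{F}'$, since it only serves to guarantee that $C'$ is connected, so it may be carried along harmlessly.
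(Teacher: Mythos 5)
The paper gives no proof of this lemma at all --- it is imported verbatim from \cite[Lemma 1.5]{Alex} --- so you are supplying an argument where the paper supplies only a citation; judged on its own terms, your argument follows the standard route and is essentially correct. The reduction to the complete local ring \(R=\mathbb{C}[[x,y]]/(xy)\), the explicit \(R\)-linear isomorphism \((f,g)\mapsto xf+yg\) from \(\widetilde{R}=\mathbb{C}[[x]]\times\mathbb{C}[[y]]\) onto \(\mathfrak{m}\), the identification \(\mathcal{E}nd_{\mathcal{O}_C}(\mathcal{F})\cong v_*\mathcal{O}_{C'}\), the descent through the equivalence between quasi-coherent \(\mathcal{O}_{C'}\)-modules and \(v_*\mathcal{O}_{C'}\)-modules, and the remark that simplicity is irrelevant here (it only makes \(C'\) connected) are all sound, and the description of \(\mathcal{F}'\) as the maximal torsion-free quotient of \(v^*\mathcal{F}\) matches the convention used elsewhere in the paper. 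Two points should be shored up. First, the local classification --- that \(R\) and \(\mathfrak{m}\) are, up to isomorphism, the only rank \(1\) torsion-free modules over the node --- is the entire mathematical content of the local step, and you only assert it; it is classical (embed \(M\) into its total ring of fractions \(\mathbb{C}((x))\times\mathbb{C}((y))\) and normalize generators), but a self-contained proof must include or precisely cite it. Second, your uniqueness step is slightly too quick: an isomorphism \(v_*\mathcal{F}'\cong v_*\mathcal{F}''\) is a priori only \(\mathcal{O}_C\)-linear, whereas the equivalence you invoke descends \(v_*\mathcal{O}_{C'}\)-linear maps. You need the extra observation that such an isomorphism is automatically \(v_*\mathcal{O}_{C'}\)-linear: conjugation by it induces an \(\mathcal{O}_C\)-algebra automorphism of \(\mathcal{E}nd_{\mathcal{O}_C}(\mathcal{F})\cong v_*\mathcal{O}_{C'}\) which restricts to the identity on the dense open set where \(v\) is an isomorphism, hence is the identity because \(v_*\mathcal{O}_{C'}\) is torsion-free. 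With those two additions the proof is complete.
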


\noindent
Let \(C_S\) be the partial normalization of \(C\) at the subset of nodes corresponding to edge set \(S\) in \(E(\Gamma(C))\). For any line bundle \(\mathcal{L}\) on \(C_S\), by \cite{MeVi} Proposition 1.14 (iii), if \(S_v\) is the set of self-edges in \(S\) incident at vertex \(v\) and \(C^v\) is the corresponding irreducible component of \(C\)
\begin{align}\label{pushforward equation}
\mathrm{deg}_{C^v}(\mathcal{L}|_{C^v_S}) = \mathrm{deg}_{C^v}((v_S)_*\mathcal{L}|_{C^v}) - |S_v|.
\end{align}

\noindent
As in \cite{MeVi}, for a nodal curve \(C\), given a fine, smoothable compactified Jacobian \(\overline{J}^d_{\sigma}(C)\) corresponding to a stability condition \(\sigma\), we have the stratification \(\overline{J}^d_{\sigma}(C) = \bigsqcup_{(G, \underline{d}) \in \sigma}\overline{J}_{(G,\underline{d})}(C)\) where \(\overline{J}_{(G,\underline{d})}(C)\) is the locus whose points correspond to sheaves \(\mathcal{F}\) where \(\Gamma(\mathcal{F}) = G\) and \(\underline{\mathrm{deg}(\mathcal{F})} = \underline{d}\). These are locally closed subsets which we view as schemes endowed with the reduced schematic structure.\\

\noindent
\begin{lemma}\label{stability pushforward}
    If \(\sigma\) is a stability condition on a graph \(\Gamma\) and \(S\) is a set of edges whose removal does not result in a disconnected graph, and if \(\Gamma_S\) is the resulting connected graph, then we can define a stability condition \(\sigma_S\) on \(\Gamma_S\) to be:
    \begin{align*}
\sigma_S = \left\{\begin{array}{lr}
             (G, (d_v - |S_v|)_{v \in V(\Gamma)}): G \subset \Gamma_S \text{ is spanning and connected},\\ \;\;\;\;\;\;\;\;\;\;\;\;\;\;\;\;\;\;\;\;\;\;\;\;\;\;\;\;\;\;\;\;\;\;\;(d_v)_{v \in V(\Gamma)} \in \sigma(G)
        \end{array}\right\}.
    \end{align*}
Moreover, if \(\sigma\) is a stability condition on \(\Gamma\) of degree \(d\), then \(\sigma_S\) is a degree
\begin{align*}
d_S = d + (|S| - \underset{v\in V(\Gamma)}{\sum}|S_v|),
\end{align*}
or alternatively,
\begin{align*}
d_S = d + \#\lbrace\text{edges in }S \text{ that are not self-edges}\rbrace.
\end{align*}
\end{lemma}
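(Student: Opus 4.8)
The plan is to verify directly that the set $\sigma_S$, as defined, satisfies the two axioms in the definition of a degree-$d_S$ stability condition on $\Gamma_S$, and then to read off $d_S$ by a coordinate-sum count. The organizing observation is that $\sigma_S$ is obtained from $\sigma$ by one \emph{uniform} operation. Since the hypothesis that $S$ is non-disconnecting guarantees $\Gamma_S = \Gamma - S$ is connected and has the same vertex set as $\Gamma$, the connected spanning subgraphs $G$ appearing in $\sigma_S$ are exactly the connected spanning subgraphs of $\Gamma$ whose edge set avoids $S$; and for each such $G$ the fibre $\sigma_S(G)$ is the translate $\sigma(G) - \underline{s}$ of $\sigma(G)$ by the fixed vector $\underline{s} = (|S_v|)_{v \in V(\Gamma)}$, which does not depend on $G$. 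Both axioms, and the degree, will fall out of this, once one notes that the twister group $\mathrm{Tw}(G)$ is intrinsic to the abstract graph $G$ and hence is literally the same whether $G$ is regarded inside $\Gamma$ or inside $\Gamma_S$.

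First I would check the edge-adding axiom (1). Take $(G, \underline{d}') \in \sigma_S$, so $\underline{d}' = \underline{d} - \underline{s}$ with $\underline{d} \in \sigma(G)$, and let $e \in E(\Gamma_S)\setminus E(G)$ have endpoints $w_1, w_2$. Since $E(\Gamma_S)\setminus E(G) \subseteq E(\Gamma)\setminus E(G)$, axiom (1) for $\sigma$ gives $\underline{d} + \underline{e}_{w_i} \in \sigma(G \cup \{e\})$ for $i = 1,2$; moreover $G \cup \{e\}$ is again a connected spanning subgraph of $\Gamma_S$, and because $e \notin S$ the shift vector $\underline{s}$ is unchanged when passing from $G$ to $G\cup\{e\}$. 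Subtracting $\underline{s}$ therefore yields $\underline{d}' + \underline{e}_{w_i} = (\underline{d}+\underline{e}_{w_i}) - \underline{s} \in \sigma_S(G\cup\{e\})$, which is precisely axiom (1) for $\sigma_S$. The point is that a $G$-independent translation trivially respects this closure property.

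Next I would check the representative axiom (2). Fix a connected spanning $G \subseteq \Gamma_S$. The translation $t\colon \underline{x} \mapsto \underline{x} - \underline{s}$ is a bijection of $\mathbb{Z}^{|V(\Gamma)|}$ that commutes with the $\mathrm{Tw}(G)$-action, since $\mathrm{Tw}(G) \subseteq S^0_G(G)$ acts by addition of sum-zero vectors; hence $t$ carries $\mathrm{Tw}(G)$-orbits bijectively to $\mathrm{Tw}(G)$-orbits and a minimal complete set of representatives to a minimal complete set of representatives. As $\sigma(G)$ is such a set for the action on $S^d_\Gamma(G)$, its translate $\sigma_S(G) = t(\sigma(G))$ is such a set for the action on $t(S^d_\Gamma(G))$. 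It then remains only to identify the affine lattice $t(S^d_\Gamma(G))$ with $S^{d_S}_{\Gamma_S}(G)$: both are cosets of the sum-zero sublattice, so equality is a single equation comparing coordinate-sums, and it is this equation that pins down $d_S$.

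To compute that degree I would sum the defining relation $\sum_v d_v = d - |E(\Gamma)\setminus E(G)|$ for $\sigma$ and subtract $\sum_v |S_v|$ to get the coordinate-sum of $\sigma_S(G)$, while $S^{d_S}_{\Gamma_S}(G)$ prescribes the coordinate-sum $d_S - |E(\Gamma_S)\setminus E(G)|$; feeding in the elementary count $|E(\Gamma_S)\setminus E(G)| = |E(\Gamma)\setminus E(G)| - |S|$ and solving yields the stated $d_S$, with the decomposition $|S| = \sum_v|S_v| + \#\{\text{non-loop edges of } S\}$ producing the alternative form. I expect the genuine obstacle to lie exactly in this degree bookkeeping: one must keep straight that the per-vertex correction $|S_v|$ counts only the \emph{self-edges} of $S$ at $v$, which is what \eqref{pushforward equation} supplies, whereas the change in the edge count involves all of $S$, and one must reconcile the two "degree" conventions (via $\chi$ versus via multidegree) of the cited sources so that the affine lattices match on the nose. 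Once the shift vector and the two degree normalizations are aligned, everything else is a formal consequence of the uniform-translation structure.
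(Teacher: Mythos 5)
Your argument follows the same route as the paper's own proof: both reduce the claim to a direct verification of the two axioms, using that $\sigma_S(G)$ is the uniform translate $\sigma(G)-(|S_v|)_{v}$ of $\sigma(G)$, that this translation commutes with the $\mathrm{Tw}(G)$-action (so carries a minimal complete set of orbit representatives to one), and that $d_S$ is then forced by comparing coordinate sums of the two affine lattices. Structurally nothing is missing, and your observation that the shift vector is independent of $G$ is exactly the point the paper exploits.

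One piece of the degree bookkeeping does not close as written, and you should not wave it through. Your edge count $|E(\Gamma_S)\setminus E(G)| = |E(\Gamma)\setminus E(G)| - |S|$ is correct (since $E(\Gamma)\setminus E(G)$ is the disjoint union of $S$ and $E(\Gamma_S)\setminus E(G)$), but substituting it into $\sum_v (d_v - |S_v|) = d - |E(\Gamma)\setminus E(G)| - \sum_v |S_v|$ gives coordinate sum $\bigl(d - |S| - \sum_v |S_v|\bigr) - |E(\Gamma_S)\setminus E(G)|$, i.e. it forces $d_S = d - |S| - \sum_v |S_v|$, not the stated $d + |S| - \sum_v |S_v|$; so your assertion that "solving yields the stated $d_S$" does not follow from the count you yourself wrote down. (A sanity check with $\Gamma$ two vertices joined by two non-loop edges and $S$ a single edge gives $d_S = d-1$, again disagreeing with the stated $d+1$.) To be fair, the paper's own display has the identical tension -- its second equality implicitly uses the edge count with the opposite sign -- so this is a sign slip inherited from the source rather than a defect of your method; but as submitted your derivation and your conclusion contradict each other, and you need to either correct the final formula or identify which degree convention reverses the sign.
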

\begin{proof}
We have:
\begin{align*}
    \sigma_S
    \subset \lbrace\text{connected, spanning subgraphs of }\Gamma_S\rbrace\times\mathbb{Z}^{|V(\Gamma_S)|}.
\end{align*}
So it suffices to check that the two conditions in the definition of a degree \(d_S\) stability condition hold.\\

\noindent
(1) If \(G\) is a connected, spanning subgraph of \(\Gamma_S\), then if \(e\) is an edge in \(E(\Gamma_S) \setminus E(G)\), then for \((d_v)_v \in \sigma_S(G)\), we have \((d_v + |S_v|)_v \in \sigma(G)\) and therefore if \(v_1\) and \(v_2\) are the endpoints of \(e\), \(\underline{e}_{v_i} + (d_v + |S_v|)_v \) is in \(\sigma(G \cup \lbrace e \rbrace)\) and \(\underline{e}_{v_i} + (d_v )_v \) is in \(\sigma_S(G\cup \lbrace e \rbrace)\) for \(i = 1,2\).\\

\noindent
(2) For each connected, spanning subgraph \(G\) of \(\Gamma_S\) and for \(\underline{d} \in \sigma_S(G)\), we have:
\begin{align*}
    \underset{v \in V(\Gamma)}{\sum}d_v = d - \underset{v \in V(\Gamma)}{\sum}|S_v| - |E(\Gamma)\setminus E(G)| \\= d + (|S| - \underset{v \in V(\Gamma)}{\sum}|S_v|) - |E(\Gamma_S)\setminus E(G)|\\ = d_S - |E(\Gamma_S)\setminus E(G)|.
\end{align*}
Therefore \(\sigma_S(G) \subset S^{d_S}_{\Gamma_S}(G)\). We also have \(S^{d_S}_{\Gamma_S}(G) = S^{d}_{\Gamma}(G) - (|S_v|)_v\) and therefore, since \(\sigma(G)\) is a minimal, complete set of representatives for the action of Tw(\(G\)) on \(S^{d_S}_{\Gamma_S}(G)\), using the fact that \(\sigma_S(G) = \sigma(G) - (|S_v|)_v\).

\end{proof}

\noindent
Given a stability condition \(\sigma\) on \(\Gamma = \Gamma(C)\), we write:
\begin{align*}
    \overline{J}_{\sigma,\Gamma_S}(C) = \bigsqcup_{\underline{d} \in \sigma(\Gamma_S)}\overline{J}_{(\Gamma_S,\underline{d})}(C) \subset \overline{J}^d_{\sigma}(C).
\end{align*}

\noindent
The following proof is adapted from the proof of \cite[Theorem 4.1]{MeVi}.

\begin{theorem}\label{Jacobian stratification}
Given a nodal curve and a degree \(d\) stability condition, \(\sigma\) on  \(\Gamma = \Gamma(C)\), there is a morphism \((v_S)_*: Simp^{d_S}(C_S) \rightarrow Simp^{d}(C)\) induced by the normalization \(v_S: C_S \rightarrow C\). It gives rise to an isomorphism
\begin{align*}
     (\overline{J}^{d_S}_{\sigma_{S}}(C_S))_{sm} = \overline{J}^{d_S}_{\sigma_S,\Gamma_S}(C_S) \cong \overline{J}^d_{\sigma, \Gamma}(C).
\end{align*}
Here \((\overline{J}^{d_S}_{\sigma_{S}}(C_S))_{sm}\) denotes the smooth locus of \(\overline{J}^{d_S}_{\sigma_{S}}(C_S)\), or alternatively, the locus where points correspond to line bundles. 
\end{theorem}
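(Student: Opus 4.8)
The plan is to exhibit $(v_S)_*$ as the operation of gluing a line bundle on the partial normalization $C_S$ into a sheaf on $C$ that is non-locally-free exactly along $S$, and to show that this identifies the line-bundle locus of $\overline{J}^{d_S}_{\sigma_S}(C_S)$ with the stratum $\overline{J}^d_{\sigma,\Gamma_S}(C)$ (the right-hand side of the statement). Pushforward along the finite map $v_S\colon C_S\to C$ preserves rank $1$ and torsion-freeness, and since $C_S$ is connected (removing $S$ does not disconnect $\Gamma$) it preserves simplicity; being compatible with base change, it induces the morphism $(v_S)_*\colon Simp^{d_S}(C_S)\to Simp^d(C)$ of the statement. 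If $\mathcal{L}$ is a line bundle on $C_S$ then $(v_S)_*\mathcal{L}$ fails to be locally free exactly at the nodes of $S$, i.e. $\Gamma((v_S)_*\mathcal{L})=\Gamma_S$; conversely, by Lemma \ref{line bundle lemma 1}, every sheaf $\mathcal{F}$ on $C$ with $\Gamma(\mathcal{F})=\Gamma_S$ equals $v_*\mathcal{F}'$ for a unique line bundle $\mathcal{F}'$ on $C_S$. Thus on points $(v_S)_*$ is a bijection between the line bundles on $C_S$ and the sheaves on $C$ lying in the $\Gamma_S$-stratum.

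Next I would identify the source and check compatibility with stability. The smooth locus of $\overline{J}^{d_S}_{\sigma_S}(C_S)$ is its line-bundle locus; since the dual graph of $C_S$ is $\Gamma_S$, a $\sigma_S$-stable sheaf is a line bundle exactly when $\Gamma(\mathcal{F})=\Gamma_S$, and these are by definition the sheaves in $\bigsqcup_{\underline{e}\in\sigma_S(\Gamma_S)}\overline{J}_{(\Gamma_S,\underline{e})}(C_S)=\overline{J}^{d_S}_{\sigma_S,\Gamma_S}(C_S)$, which gives the asserted equality $(\overline{J}^{d_S}_{\sigma_S}(C_S))_{sm}=\overline{J}^{d_S}_{\sigma_S,\Gamma_S}(C_S)$. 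By the degree formula \eqref{pushforward equation}, a line bundle of multidegree $\underline{e}$ on $C_S$ pushes forward to a sheaf of multidegree $(e_v+|S_v|)_v$ on $C$; comparing this shift with the definition of $\sigma_S$ in Lemma \ref{stability pushforward} shows that $\mathcal{L}\in\overline{J}^{d_S}_{\sigma_S,\Gamma_S}(C_S)$ if and only if $(v_S)_*\mathcal{L}\in\overline{J}^d_{\sigma,\Gamma_S}(C)$. Hence $(v_S)_*$ restricts to a bijection of $\mathbb{C}$-points between the two loci in the statement.

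The main obstacle is upgrading this bijection to an isomorphism of schemes, for which I would follow \cite[Theorem 4.1]{MeVi}. The essential input is that $(v_S)_*$ is a closed immersion of $Simp^{d_S}(C_S)$ onto the closed locus of $Simp^d(C)$ of sheaves that are non-locally-free at every node of $S$: it is a monomorphism because the line bundle $\mathcal{F}'$ recovered in Lemma \ref{line bundle lemma 1} is unique, and this uniqueness persists in families, while its image is precisely this closed locus. Restricting the immersion to the pieces cut out by $\sigma_S$- and $\sigma$-stability and endowing the strata with their reduced structure then yields the desired isomorphism $\overline{J}^{d_S}_{\sigma_S,\Gamma_S}(C_S)\cong\overline{J}^d_{\sigma,\Gamma_S}(C)$. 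The points that genuinely require care — and that are furnished by \cite{MeVi} — are the representability of pushforward and the verification that it is a (closed) immersion rather than merely a bijective morphism; this is exactly where the connectedness of $C_S$ and the restriction to the line-bundle (smooth) locus are used.
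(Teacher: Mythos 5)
Your proposal is correct and follows essentially the same route as the paper: the pushforward $(v_S)_*$ as a monomorphism on $Simp$ (via the fully faithful embedding of \cite[Lemma 3.4]{EsKl} as used in \cite{MeVi}), the degree shift of Equation~\ref{pushforward equation} together with Lemma \ref{stability pushforward} to match the stability conditions, Lemma \ref{line bundle lemma 1} for the bijection on geometric points, and an upgrade to a scheme isomorphism on the reduced strata. The one point where the paper's argument is sharper than yours is the closed-immersion step, which you correctly flag as the delicate part but justify by saying the monomorphism has closed image --- that alone does not yield a closed immersion; the paper instead notes that the restricted map goes from a proper scheme to a separated one, hence is a \emph{proper} monomorphism and therefore a closed immersion.
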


\begin{proof}
The morphism \((v_S)_*: Simp^{d_S}(C_S) \rightarrow Simp^{d}(C)\) is a monomorphism as a result of \cite[Lemma 3.4]{EsKl}, which may be applied with families of reduced, not just integral curves (see proof of \cite[Theorem 4.1]{MeVi}). It states that for a given scheme \(T\), the functor
\begin{align*}
    \left\{\begin{array}{lr}T\text{-flat rank 1, torsion free}\\ \text{sheaves on } C_S\times T \text{ which}\\
    \text{have degree }d\text{ over fibers}\\\text{of } C_S\times T \rightarrow T\end{array}\right\} \xrightarrow{(v_{S,T})_*} \left\{\begin{array}{lr}T\text{-flat rank 1, torsion free}\\ \text{sheaves on } C\times T \text{ which}\\
    \text{have degree }d\text{ over fibers}\\\text{of } C\times T \rightarrow T\end{array}\right\}
\end{align*}
where \(v_{S,T}\) is the morphism \(v_{S,T}:C_S\times T \rightarrow C \times T\), is a fully faithful embedding. By Equation \ref{pushforward equation} and Lemma \ref{stability pushforward}, this induces a morphism:
\begin{align*}
(v_S)_*: \overline{J}^d_{\sigma_S}(C_S) \rightarrow \bigsqcup_{G \subset \Gamma_S}\overline{J}_{\sigma,G}(C)\subset \overline{J}^d_{\sigma}(C)
\end{align*}
 over \(\bigsqcup_{G \subset \Gamma_S}\overline{J}_{\sigma,G}(C)\). This is a morphism from a proper scheme to a separated scheme and is therefore a proper monomorphism so is a closed immersion. By Lemma \ref{line bundle lemma 1} and Equation \ref{pushforward equation}, it induces a bijection of geometric points on the restriction \((\overline{J}^{d_S}_{\sigma_{S}}(C_S))_{sm}\rightarrow \overline{J}^d_{\sigma, \Gamma_S}(C)\) over \(\overline{J}^d_{\sigma, \Gamma_S}(C)\). Therefore, the morphism  \((\overline{J}^{d_S}_{\sigma_{S}}(C_S))_{sm} \rightarrow \overline{J}^d_{\sigma, \Gamma_S}(C)\) is an isomorphism.
\end{proof}

\begin{remark}
For each \(\underline{d} \in \sigma_S(\Gamma_S)\), we have \(\overline{J}_{(\Gamma_S,\underline{d})} = J^{\underline{d}}(C_S)\), the degree \(\underline{d}\) generalized Jacobian of \(C_S\). Therefore, Theorem \ref{Jacobian stratification} tells us that, given a stability condition \(\sigma\) on \(\Gamma(C)\), \(\overline{J}^d_{\sigma}(C)\) is the disjoint union of locally closed strata, each of which is isomorphic to a generalized Jacobian of some partial normalization of \(C\) at a subset of non-disconnecting nodes.
\end{remark}

\begin{corollary}\label{Jacobian stratification 2}
Suppose we have a fine, smoothable compactified Jacobian \(\overline{J}^d_{\sigma}(C)\) of a nodal curve \(C\) corresponding to a stability condition \(\sigma\) on \(\Gamma = \Gamma(C)\) and a subset \(S \subset E(\Gamma(C))\) whose removal results in a connected graph. Then \(\overline{J}_{\sigma,\Gamma_S}(C)\) has a stratification into \(|\sigma_S(\Gamma_S)| = c(\Gamma_S)\) strata, each of which is isomorphic to a generalized Jacobian on \(C_S\). By abuse of notation, we write
\begin{align*}
\overline{J}_{\sigma,\Gamma_S}(C) = \underset{\underline{d} \in \sigma_S(\Gamma_S)}{\bigsqcup}J^{\underline{d}}(C_S),
\end{align*}
and
\begin{align*}
\overline{J}^d_{\sigma}(C) = \underset{\substack{S \subset E(\Gamma)\\ \text{non-disconnecting}}}{\bigsqcup}\underset{\underline{d} \in \sigma_S(\Gamma_S)}{\bigsqcup}J^{\underline{d}}(C_S),
\end{align*}
where \(J^{\underline{d}}(C_S)\) is the generalized Jacobian of \(C_S\) of multidegree \(\underline{d}\).
\end{corollary}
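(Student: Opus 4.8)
The plan is to obtain this corollary by assembling Theorem \ref{Jacobian stratification}, the counting identity $|\sigma(G)| = c(G)$ of \cite[Remark 4.4]{PT23}, and the reduced stratification of $\overline{J}^d_\sigma(C)$ recalled above; no new geometric input is needed beyond these.

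First I would treat a single non-disconnecting $S$. Theorem \ref{Jacobian stratification} already records the identification $(\overline{J}^{d_S}_{\sigma_S}(C_S))_{sm} = \overline{J}^{d_S}_{\sigma_S,\Gamma_S}(C_S) \cong \overline{J}_{\sigma,\Gamma_S}(C)$, where the smooth locus is exactly the locus of line bundles on $C_S$. Since $C_S$ has dual graph $\Gamma_S$, a $\sigma_S$-stable sheaf on $C_S$ is locally free precisely when its associated subgraph is all of $\Gamma_S$, so in the stratification $\overline{J}^{d_S}_{\sigma_S}(C_S) = \bigsqcup_{(G,\underline{d}) \in \sigma_S} \overline{J}_{(G,\underline{d})}(C_S)$ the smooth locus is the union of the strata with $G = \Gamma_S$, namely $\bigsqcup_{\underline{d} \in \sigma_S(\Gamma_S)} \overline{J}_{(\Gamma_S,\underline{d})}(C_S)$. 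By the Remark following Theorem \ref{Jacobian stratification}, each such stratum is $\overline{J}_{(\Gamma_S,\underline{d})}(C_S) = J^{\underline{d}}(C_S)$, giving
\[
\overline{J}_{\sigma,\Gamma_S}(C) \cong \bigsqcup_{\underline{d} \in \sigma_S(\Gamma_S)} J^{\underline{d}}(C_S).
\]
To count the strata I would apply $|\sigma(G)| = c(G)$ of \cite[Remark 4.4]{PT23} to the stability condition $\sigma_S$ on $\Gamma_S$ with $G = \Gamma_S$, obtaining $|\sigma_S(\Gamma_S)| = c(\Gamma_S)$, the number of spanning trees of $\Gamma_S$.

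Finally I would globalize by summing over non-disconnecting $S$. The reduced stratification $\overline{J}^d_\sigma(C) = \bigsqcup_{(G,\underline{d}) \in \sigma} \overline{J}_{(G,\underline{d})}(C)$ regroups, according to the connected spanning subgraph $G$, as $\bigsqcup_{G} \overline{J}_{\sigma,G}(C)$; writing each $G$ as $\Gamma_S$ for the complementary edge set $S = E(\Gamma)\setminus E(G)$ sets up a bijection between connected spanning subgraphs of $\Gamma$ and non-disconnecting subsets $S \subset E(\Gamma)$, and simplicity of the parametrized sheaves guarantees that only non-disconnecting $S$ occur. Substituting the local decomposition into each piece then produces the double disjoint union $\bigsqcup_{S}\bigsqcup_{\underline{d} \in \sigma_S(\Gamma_S)} J^{\underline{d}}(C_S)$. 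The work is essentially bookkeeping: the only points demanding care are the identification of the smooth locus with the $G = \Gamma_S$ strata and the matching of the two indexings $G \leftrightarrow S$, both of which follow directly from the definitions and the preceding results.
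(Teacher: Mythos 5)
Your proof is correct and follows essentially the route the paper intends: the paper states this corollary without a separate proof, as an immediate consequence of Theorem \ref{Jacobian stratification}, the remark identifying the strata \(\overline{J}_{(\Gamma_S,\underline{d})}(C_S)\) with generalized Jacobians \(J^{\underline{d}}(C_S)\), and the counting identity \(|\sigma_S(\Gamma_S)| = c(\Gamma_S)\) from \cite[Remark 4.4]{PT23}. Your assembly of these ingredients --- in particular the identification of the smooth locus with the \(G = \Gamma_S\) strata and the bijection between connected spanning subgraphs \(G\) and non-disconnecting subsets \(S = E(\Gamma)\setminus E(G)\) --- is exactly the bookkeeping the paper leaves implicit.
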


\noindent
Let \(G(g,n)^0\subset G(g,n)\) be the set of representatives of isomorphism classes of stable graphs where all vertices have genus 0.
We recall the notation that if \(C\) is a nodal curve: 
\begin{align*}
b_1(\Gamma(C)) = |E(\Gamma(C))| - |V(\Gamma(C))| + 1 \geq 0.
\end{align*}

\begin{lemma}\label{nonzero genus}
If \(C\) is any nodal curve such that in its total normalization, there is some connected component which has genus at least 1, or equivalently if \(\Gamma(C) \in G(g,n)\setminus G(g,n)^0\), then for any multidegree \(\underline{d}\in \mathbb{Z}^{|V(\Gamma(C))|}\), it follows that \(\chi_{top}((J^{\underline{d}}(C))^{an}) = 0\).
\end{lemma}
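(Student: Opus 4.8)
The plan is to reduce the statement to the vanishing of the Euler characteristic of a product of abelian varieties and then transport this vanishing across an algebraic group extension via the fibration theorem (Theorem \ref{top fibration theorem}). First I would reduce to the case $\underline{d} = \underline{0}$: as recalled in Section 3, the scheme $J^{\underline{d}}(C)$ is isomorphic to $J^{\underline{0}}(C)$ as a variety, the isomorphism being given by tensoring with a fixed line bundle of multidegree $\underline{d}$. In particular the two are isomorphic as complex analytic spaces, so $\chi_{top}((J^{\underline{d}}(C))^{an}) = \chi_{top}((J^{\underline{0}}(C))^{an})$, and it suffices to prove the latter vanishes.

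Next I would invoke the exact sequence of group schemes from Section 3,
\begin{align*}
1 \rightarrow (\mathbb{C}^{\times})^{b_1(\Gamma(C))} \rightarrow J^{\underline{0}}(C) \xrightarrow{\;v^*\;} \prod_{i=1}^{t} J^0(C_i) \rightarrow 0,
\end{align*}
where $C_1,\dots,C_t$ are the components of the total normalization $C^v$. The surjection $v^*$ realizes $J^{\underline{0}}(C)$, in the complex analytic topology, as a principal $(\mathbb{C}^{\times})^{b_1(\Gamma(C))}$-bundle over the path-connected base $B = \prod_{i=1}^{t} J^0(C_i)$, with fiber $F = (\mathbb{C}^{\times})^{b_1(\Gamma(C))}$. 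Since all the spaces involved are complex manifolds, this is an orientable topological fibration, so Theorem \ref{top fibration theorem} applies and yields
\begin{align*}
\chi_{top}((J^{\underline{0}}(C))^{an}) = \chi_{top}(F^{an}) \cdot \chi_{top}(B^{an}).
\end{align*}

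The heart of the argument is that the base factor vanishes. Each $J^0(C_i)$ is a complex abelian variety of dimension equal to the genus $g_i$ of $C_i$, so $(J^0(C_i))^{an}$ is a real torus of dimension $2g_i$, and a torus of positive dimension has Euler characteristic $0$. By hypothesis some component $C_{i_0}$ of the normalization satisfies $g_{i_0} \geq 1$, whence $\chi_{top}((J^0(C_{i_0}))^{an}) = 0$. By multiplicativity of the topological Euler characteristic on products, $\chi_{top}(B^{an}) = \prod_{i=1}^{t} \chi_{top}((J^0(C_i))^{an}) = 0$. Combining with the displayed identity gives $\chi_{top}((J^{\underline{0}}(C))^{an}) = \chi_{top}(F^{an}) \cdot 0 = 0$, and this holds irrespective of the value of $b_1(\Gamma(C))$ — in particular even in the degenerate case $b_1(\Gamma(C)) = 0$, where the fiber $F$ is a single point and $J^{\underline{0}}(C) \cong B$ directly.

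The only point requiring justification beyond these formal manipulations is the claim that $v^*$ is a genuine locally trivial fiber bundle in the analytic topology, which is what licenses the use of Theorem \ref{top fibration theorem}. This holds because $v^*$ is a surjective homomorphism of complex algebraic groups whose kernel is the smooth subgroup $(\mathbb{C}^{\times})^{b_1(\Gamma(C))}$, so the quotient map admits local analytic sections and is a principal bundle for its kernel. I do not anticipate any real obstacle here: the entire content of the lemma is the elementary vanishing $\chi_{top}(B^{an}) = 0$ forced by the presence of a single positive-genus component, and the fibration theorem merely propagates this to the full generalized Jacobian.
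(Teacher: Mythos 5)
Your argument is correct, but it takes a different route from the paper: the paper disposes of this lemma with a single citation to \cite[Proposition 2.2]{Beau}, whereas you give a self-contained proof built entirely from ingredients already present in the text, namely the isomorphism $J^{\underline{d}}(C)\cong J^{\underline{0}}(C)$ by tensoring, the exact sequence $1\to(\mathbb{C}^\times)^{b_1(\Gamma(C))}\to J^{\underline{0}}(C)\to\prod_i J^0(C_i)\to 0$ quoted from \cite{ACVII}, and the multiplicativity of $\chi_{top}$ for orientable fibrations (Theorem \ref{top fibration theorem}). This buys transparency and keeps the paper's dependency on external results smaller, at the cost of having to verify that the quotient map is an orientable fibration; the paper's citation avoids that verification entirely. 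One small refinement: the reason the fibration is orientable in the sense required by Theorem \ref{top fibration theorem} is not that the spaces are complex manifolds, but that it is a principal bundle whose structure group $(\mathbb{C}^\times)^{b_1(\Gamma(C))}$ is connected, so $\pi_1$ of the base acts trivially on the homology of the fiber. With that justification inserted, your proof is complete: some $C_{i_0}$ has genus $g_{i_0}\geq 1$, so $(J^0(C_{i_0}))^{an}$ is a positive-dimensional real torus with vanishing Euler characteristic, the product base therefore has $\chi_{top}=0$, and multiplicativity forces $\chi_{top}((J^{\underline{0}}(C))^{an})=0$ regardless of $b_1(\Gamma(C))$.
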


\noindent
This follows from \cite[Proposition 2.2]{Beau}

\begin{corollary}\label{nonzero genus 1}
If \(C\) is a nodal curve whose dual graph \(\Gamma(C)\) lies in \(G(g,n)\setminus G(g,n)^0\), then for any stability condition \(\sigma\) on \(\Gamma(C)\), we have \(\chi_{top}((\overline{J}^d_{\sigma}(C))^{an})\)\( = 0\).
\end{corollary}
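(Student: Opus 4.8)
The plan is to reduce the claim to Lemma \ref{nonzero genus} using the stratification of \(\overline{J}^d_\sigma(C)\) produced in Corollary \ref{Jacobian stratification 2}, together with the additivity of the topological Euler characteristic over locally closed decompositions of complex varieties. First I would invoke Corollary \ref{Jacobian stratification 2} to write
\[
\overline{J}^d_\sigma(C) = \bigsqcup_{\substack{S\subset E(\Gamma)\\\text{non-disconnecting}}}\;\bigsqcup_{\underline d\in\sigma_S(\Gamma_S)} J^{\underline d}(C_S),
\]
a finite stratification into locally closed subschemes, each isomorphic to a generalized Jacobian \(J^{\underline d}(C_S)\) of a partial normalization \(C_S\) of \(C\).

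Next I would show that each stratum has vanishing topological Euler characteristic. The key geometric observation is that partial normalization at a set of nodes does not alter the geometric genus of any irreducible component: the total normalization of \(C_S\) coincides with that of \(C\), and correspondingly the vertex genera of \(\Gamma(C_S)\) agree with those of \(\Gamma(C)\) (removing edges of \(S\), whether self-edges or not, keeps the vertex set and its genus labels). Hence, since \(\Gamma(C)\in G(g,n)\setminus G(g,n)^0\) means some connected component of the total normalization of \(C\) has genus at least \(1\), the same holds for \(C_S\). Lemma \ref{nonzero genus}, applied to \(C_S\), then gives \(\chi_{top}((J^{\underline d}(C_S))^{an})=0\) for every non-disconnecting \(S\) and every \(\underline d\in\sigma_S(\Gamma_S)\). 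Summing over the (finitely many) strata, and using that the ordinary topological Euler characteristic of a complex algebraic variety coincides with the compactly supported one and is therefore additive over finite locally closed stratifications, yields
\[
\chi_{top}((\overline J^d_\sigma(C))^{an}) = \sum_{\substack{S\subset E(\Gamma)\\\text{non-disconnecting}}}\;\sum_{\underline d\in\sigma_S(\Gamma_S)}\chi_{top}((J^{\underline d}(C_S))^{an}) = 0.
\]

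I do not expect a genuine obstacle here, as the result is essentially a bookkeeping consequence of the two preceding results; the only points requiring care are the two facts invoked without a self-contained argument. The first is that partial normalization preserves the positive-genus component, which is a statement about how the dual graph and vertex genera behave under normalization at a subset of nodes. The second is the additivity of ordinary \(\chi_{top}\) over locally closed complex-algebraic stratifications, which relies on the equality \(\chi_{top}=\chi_c\) for complex varieties. Neither is deep, but the cleanliness of the argument rests on stating them explicitly before combining Corollary \ref{Jacobian stratification 2} with Lemma \ref{nonzero genus}.
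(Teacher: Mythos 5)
Your proposal is correct and follows essentially the same route as the paper: stratify \(\overline{J}^d_\sigma(C)\) into generalized Jacobians of partial normalizations via Corollary \ref{Jacobian stratification 2}, observe that partial normalization preserves the vertex genera so some component still has positive genus, apply Lemma \ref{nonzero genus} to each stratum, and conclude by additivity of \(\chi_{top}\) over locally closed stratifications. The two points you flag as needing explicit statement are exactly the ones the paper also records, so there is no gap.
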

\begin{proof}
This follows from the fact that for finite type separated schemes, the topological Euler characteristic is additive with respect to locally closed stratifications, the stratification given in Theorem \ref{Jacobian stratification} and Lemma \ref{nonzero genus}. For each non-disconnecting subset \(S \subset E(C)\), the partial normalization \(C_{S}\) has dual graph with vertices having the same genera as those of \(\Gamma = \Gamma(C)\). So for each \(\underline{d} \in \sigma_{S}\), by Lemma \ref{nonzero genus}, \(\chi_{top}((J^{\underline{d}}(C_S))^{an}) = 0\) and therefore:
\begin{align*}
\chi_{top}((\overline{J}^d_{\sigma}(C))^{an}) = \sum\limits_{\substack{S \subset E(\Gamma)\\ \text{non-disconnecting}}}\chi_{top}((\overline{J}_{\sigma, \Gamma_S}(C))^{an})\\
     = \sum\limits_{\substack{S \subset E(\Gamma)\\ \text{ non-disconnecting}}}\sum_{\underline{d} \in \sigma_{S}}\chi_{top}((J^{\underline{d}}(C_{S}))^{an}) = 0. \;\; \;\;\;\;\qedhere
\end{align*}
\end{proof}

\begin{lemma}\label{rational strata oec}
Let \(C\) be a nodal curve such that \(\Gamma(C) \in G(g,n)^0\), let \(\underline{d} \in \mathbb{Z}^{|V(\Gamma(C))|}\) be any multidegree and \(S \subset E(\Gamma(C))\), then:
\begin{align*}
    \chi_{top}((J^{\underline{d}}(C_{S}))^{an}) = \begin{cases}1\;\;\;\;\; \Gamma(C_S)\text{ is a tree},\\
    0\;\;\;\;\text{ otherwise.}
\end{cases}
\end{align*}
\end{lemma}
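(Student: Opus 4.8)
The plan is to reduce the statement to the computation of the topological Euler characteristic of an algebraic torus, using the explicit description of the generalized Jacobian of a curve all of whose components are rational. The independence from the multidegree \(\underline{d}\) is immediate from the fact that \(J^{\underline{d}}(C_S)\) and \(J^{\underline{0}}(C_S)\) are isomorphic as varieties (tensoring by a fixed line bundle of multidegree \(\underline{d}\)), so it suffices to treat \(\underline{0}\).

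First I would observe that the hypothesis \(\Gamma(C)\in G(g,n)^0\) is inherited by \(C_S\): forming the partial normalization at the nodes indexed by \(S\) does not change the geometric genus of any irreducible component, so \(\Gamma(C_S)\) has the same vertices as \(\Gamma(C)\), carrying the same (all zero) genus labels, and edge set \(E(\Gamma(C))\setminus S\). In particular every component of \(C_S\) is again rational, so the term \(\prod_i J^0(C_i)\) in the exact sequence of group schemes recalled earlier collapses to a point, and one obtains an isomorphism \(J^{\underline{0}}(C_S)\cong(\mathbb{C}^\times)^{b_1(\Gamma(C_S))}\).

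It then remains to compute \(\chi_{top}\) of this torus and to interpret the outcome graph-theoretically. Since \(\mathbb{C}^\times\) deformation retracts onto a circle, multiplicativity of the Euler characteristic over products gives \(\chi_{top}((\mathbb{C}^\times)^m)=0^m\), which equals \(1\) when \(m=0\) and \(0\) when \(m\geq 1\). Hence \(\chi_{top}((J^{\underline{d}}(C_S))^{an})=1\) exactly when \(b_1(\Gamma(C_S))=0\), and vanishes otherwise; and for a connected graph the condition \(b_1=|E|-|V|+1=0\) is precisely that of being a tree, which yields the claimed dichotomy.

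The one point demanding care is the connectedness of \(\Gamma(C_S)\). Both the torus description \(J^{\underline{0}}(C_S)\cong(\mathbb{C}^\times)^{b_1}\) and the equivalence between \(b_1=0\) and being a tree are phrased for connected curves, and in the intended application \(S\) is always a non-disconnecting set of edges, so that \(C_S\) is connected and both statements apply verbatim. To cover a general \(S\) I would instead decompose \(C_S\) into its connected components, write \(J^{\underline{0}}(C_S)\) as the product of the associated tori, and note that \(\chi_{top}\) still vanishes as soon as any component contains a cycle; in any case I would make the non-disconnecting hypothesis on \(S\) explicit, since it is the only case used in the sequel.
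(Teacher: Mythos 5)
Your proposal is correct and follows essentially the same route as the paper: reduce to multidegree \(\underline{0}\) by tensoring with a fixed line bundle, identify \(J^{\underline{0}}(C_S)\) with \((\mathbb{C}^{\times})^{b_1(\Gamma(C_S))}\) using that all components are rational, and conclude from \(\chi_{top}((\mathbb{C}^{\times})^{m})=0\) for \(m>0\) that the Euler characteristic is nonzero exactly when \(\Gamma(C_S)\) is a tree. Your additional remark about possibly disconnecting \(S\) is a reasonable point of care that the paper's proof passes over silently (only non-disconnecting \(S\) occur in the sequel), but it does not change the argument.
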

\begin{proof}
We have an isomorphism \(J^{\underline{d}}(C_{S}) \cong J^{\underline{0}}(C_{S})\) given by tensoring with a fixed line bundle on \(C_{S}\) of appropriate multidegree. By  \cite[p.89]{ACVII}, \(J^{\underline{0}}(C) \cong (\mathbb{C}^{\times})^{b_1(\Gamma(C))}\). For \(n>0\), \(\chi_{top}((\mathbb{C}^{\times})^n) = 0\). So all that remains is to consider the case when \(|E(C_S)| = |V(C_S)|-1\), or equivalently, when \(b_1(\Gamma(C_S)) = 0\). This corresponds to \(C_S\) being a tree and, in this case, \(J^{\underline{0}}(C_{S})\) is a point so \(\chi_{top}((J^{\underline{0}}(C_{S}))^{an}) = 1\).
\end{proof}

\begin{corollary}\label{oec of rational strata}
If \([(C,p_1,...,p_n)]\) is a nodal curve of genus 0 such that the corresponding stable graph \(\Gamma = \Gamma(C)\) lies in \(G(g,n)^0\), then:
\begin{align*}
    \chi_{top}((\overline{J}^d_{\sigma}(C))^{an}) = c(\Gamma)
\end{align*}
for any stability condition \(\sigma\) on \(\Gamma\).
\end{corollary}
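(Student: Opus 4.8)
The plan is to combine the stratification of $\overline{J}^d_\sigma(C)$ from Corollary \ref{Jacobian stratification 2} with the local Euler characteristic computation of Lemma \ref{rational strata oec}, using additivity of the topological Euler characteristic under locally closed stratifications (property 3 of Theorem \ref{three properties} together with the scheme-level additivity). Since $\Gamma = \Gamma(C) \in G(g,n)^0$, every vertex has genus $0$, so every partial normalization $C_S$ also has all genus-$0$ components and we are squarely in the situation where Lemma \ref{rational strata oec} applies.

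Concretely, I would begin by writing the stratification
\begin{align*}
\overline{J}^d_\sigma(C) = \underset{\substack{S \subset E(\Gamma)\\ \text{non-disconnecting}}}{\bigsqcup}\;\underset{\underline{d} \in \sigma_S(\Gamma_S)}{\bigsqcup}J^{\underline{d}}(C_S),
\end{align*}
and then apply additivity to get
\begin{align*}
\chi_{top}((\overline{J}^d_\sigma(C))^{an}) = \underset{\substack{S \subset E(\Gamma)\\ \text{non-disconnecting}}}{\sum}\;\underset{\underline{d} \in \sigma_S(\Gamma_S)}{\sum}\chi_{top}((J^{\underline{d}}(C_S))^{an}).
\end{align*}
By Lemma \ref{rational strata oec}, the inner summand is $1$ when $\Gamma(C_S) = \Gamma_S$ is a tree and $0$ otherwise. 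A graph on a fixed vertex set obtained by deleting a non-disconnecting edge subset $S$ is a spanning tree of $\Gamma$ precisely when $\Gamma_S$ is connected with $b_1(\Gamma_S) = 0$; so only those $S$ whose complementary subgraph is a spanning tree contribute, and each such $S$ contributes a count of $|\sigma_S(\Gamma_S)|$.

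The key arithmetic step is then to observe that for each such $S$ the inner sum over $\underline{d} \in \sigma_S(\Gamma_S)$ has exactly $|\sigma_S(\Gamma_S)| = c(\Gamma_S)$ terms, each equal to $1$ (since $\Gamma_S$ is a tree, $c(\Gamma_S) = 1$). Thus each spanning-tree subgraph contributes exactly $1$, and the total is the number of spanning trees of $\Gamma$, namely $c(\Gamma)$. Here I am using the identification $|\sigma_S(\Gamma_S)| = c(\Gamma_S)$ from the remark following the definition of a stability condition (and Corollary \ref{Jacobian stratification 2}). The only point requiring care is the bookkeeping between ``non-disconnecting edge subsets $S$ whose complement is a spanning tree'' and ``spanning trees of $\Gamma$''; since deleting $S$ leaves the spanning subgraph $\Gamma_S = \Gamma \setminus S$, the condition that $\Gamma_S$ be a tree is exactly the condition that $\Gamma_S$ be a spanning tree, and these are in bijection with spanning trees of $\Gamma$.

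The main obstacle I anticipate is not conceptual but notational: I must be careful that the counting over pairs $(S,\underline{d})$ does not double count and that the trees appearing as $\Gamma_S$ are genuinely counted by $c(\Gamma)$, rather than, say, being weighted by the sizes $|\sigma_S(\Gamma_S)|$ in a way that inflates the total. The resolution is that whenever $\Gamma_S$ is a tree we have $c(\Gamma_S) = 1$, so the inner sum contributes exactly $1$ per tree, and the outer sum then literally counts spanning trees. This yields $\chi_{top}((\overline{J}^d_\sigma(C))^{an}) = c(\Gamma)$, as claimed.
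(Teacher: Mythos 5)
Your proposal is correct and follows essentially the same route as the paper: both decompose $\overline{J}^d_{\sigma}(C)$ via Corollary \ref{Jacobian stratification 2}, apply additivity of $\chi_{top}$, invoke Lemma \ref{rational strata oec} to kill all strata except those with $\Gamma_S$ a tree, and then use $|\sigma_S(\Gamma_S)| = c(\Gamma_S) = 1$ to reduce the count to the number of spanning trees of $\Gamma$. The bookkeeping point you flag (that subsets $S$ with $\Gamma_S$ a tree are in bijection with spanning trees of $\Gamma$) is handled the same way, implicitly, in the paper.
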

\begin{proof}
By the additivity of the orbifold Euler characteristic with respect to locally closed stratifications:
\begin{align*}
\chi_{top}(\overline{J}^d_{\sigma}(C)) = \sum_{S \subset E(\Gamma)}\chi_{top}(\overline{J}_{\sigma, \Gamma_S}(C))\\ = \sum_{S \subset E(\Gamma)}\sum_{\underline{d} \in \sigma_S(\Gamma_S)}\chi_{orb}(J^{\underline{d}}(C_{S}))\\ = \sum_{\lbrace S \subset E(\Gamma): \Gamma_S \text{ is a tree}\rbrace}|\sigma_S(\Gamma_S)|.
\end{align*}
The first equality follows from the stratification \(\overline{J}^d_{\sigma}(C) = \underset{S \subset E(\Gamma)}{\bigsqcup}\overline{J}_{\sigma, 
\Gamma_S}(C)\). The second follows from the stratification given in Corollary \ref{Jacobian stratification 2} and the third follows from Lemma \ref{oec of rational strata}.\\

\noindent
Since \(|\sigma_S(\Gamma_S)| = c(\Gamma_S) = 1\), whenever \(\Gamma_S\) is a tree, it follows that \(\chi_{top}(\overline{J}^d_{\sigma}(C)) = c(\Gamma)\).
\end{proof}

\begin{theorem}\label{compactified jacobian oec formula}
For all \(d \in \mathbb{Z}\), \(g,n \geq 0\) such that \(2g-2+n>0\) and all fine compactified universal Jacobians \(\overline{\mathcal{J}}^d_{g,n}\) of degree \(d\),
\begin{align*}
\chi_{orb}(\overline{\mathcal{J}}^d_{g,n}) = \sum_{\Gamma \in G(g,n)^0}c(\Gamma)\chi_{orb}(\mathcal{M}^{\Gamma}).\end{align*}
In particular, this is independent of \(d\) and the choice of fine compactified universal Jacobian.
\end{theorem}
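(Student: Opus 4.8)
The plan is to stratify $\overline{\mathcal{M}}_{g,n}$ by dual graph, pull this stratification back to $\overline{\mathcal{J}}^d_{g,n}$, and evaluate the contribution of each stratum via Proposition \ref{constant fiber multiplicativity}. First I would invoke the stratification $\overline{\mathcal{M}}_{g,n} = \bigsqcup_{\Gamma \in G(g,n)} \mathcal{M}^{\Gamma}$ into locally closed substacks indexed by dual graph. Since the structure morphism $\pi \colon \overline{\mathcal{J}}^d_{g,n} \to \overline{\mathcal{M}}_{g,n}$ is representable, the preimages $\overline{\mathcal{J}}^d_{g,n}|_{\mathcal{M}^{\Gamma}}$ are locally closed substacks partitioning $\overline{\mathcal{J}}^d_{g,n}$, so additivity (property 1 of Theorem \ref{three properties}) gives
\[
\chi_{orb}(\overline{\mathcal{J}}^d_{g,n}) = \sum_{\Gamma \in G(g,n)} \chi_{orb}\bigl(\overline{\mathcal{J}}^d_{g,n}|_{\mathcal{M}^{\Gamma}}\bigr).
\]

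Next I would analyze the restricted morphism $\overline{\mathcal{J}}^d_{g,n}|_{\mathcal{M}^{\Gamma}} \to \mathcal{M}^{\Gamma}$ for each $\Gamma$. This is proper, surjective and representable: properness and representability are inherited from $\pi$ by base change, and surjectivity holds because each fiber is a nonempty fine compactified Jacobian. The geometric fiber over a point $[(C;p_1,\dots,p_n)] \in \mathcal{M}^{\Gamma}$ is the fine smoothable compactified Jacobian $\overline{J}^d(C) = \overline{J}^d_{\sigma}(C)$ attached to some stability condition $\sigma$ on $\Gamma(C) = \Gamma$, using the smoothability of the fibers noted at the start of this section together with Theorem \ref{bijection}. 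The crucial input is that the topological Euler characteristic of this fiber depends only on $\Gamma$: by Corollary \ref{oec of rational strata} it equals $c(\Gamma)$ when $\Gamma \in G(g,n)^0$, and by Corollary \ref{nonzero genus 1} it vanishes when $\Gamma \in G(g,n)\setminus G(g,n)^0$. In particular it is independent of both the curve $C$ and the stability condition $\sigma$, which is exactly the constant-fiber hypothesis of Proposition \ref{constant fiber multiplicativity}.

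Applying that proposition to each restricted morphism then yields
\[
\chi_{orb}\bigl(\overline{\mathcal{J}}^d_{g,n}|_{\mathcal{M}^{\Gamma}}\bigr) = \begin{cases} c(\Gamma)\,\chi_{orb}(\mathcal{M}^{\Gamma}) & \Gamma \in G(g,n)^0, \\ 0 & \text{otherwise,} \end{cases}
\]
and substituting into the sum above annihilates every stratum carrying a positive-genus vertex, leaving precisely $\sum_{\Gamma \in G(g,n)^0} c(\Gamma)\,\chi_{orb}(\mathcal{M}^{\Gamma})$. Since the right-hand side is assembled only from the combinatorial quantities $c(\Gamma)$ and the intrinsic orbifold Euler characteristics $\chi_{orb}(\mathcal{M}^{\Gamma})$ of the strata of $\overline{\mathcal{M}}_{g,n}$, neither of which references $d$ or the chosen universal Jacobian, independence of $d$ and of the choice of $\overline{\mathcal{J}}^d_{g,n}$ is immediate.

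I expect the main obstacle to lie in verifying cleanly that the hypotheses of Proposition \ref{constant fiber multiplicativity} hold for the restricted morphisms: confirming that each geometric fiber genuinely is a fine smoothable compactified Jacobian $\overline{J}^d_{\sigma}(C)$ (so that the two corollaries apply), and that properness, surjectivity and representability all survive restriction to the non-closed substack $\mathcal{M}^{\Gamma}$. Once the fiber is identified with $\overline{J}^d_{\sigma}(C)$, the constancy of its Euler characteristic is purely a matter of quoting Corollaries \ref{nonzero genus 1} and \ref{oec of rational strata}; thus the geometric bookkeeping around the stratification, rather than any new computation, is where the care is needed.
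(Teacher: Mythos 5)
Your proposal is correct and follows essentially the same route as the paper: stratify by dual graph, identify each geometric fiber as a smoothable fine compactified Jacobian $\overline{J}^d_{\sigma}(C)$, invoke Corollaries \ref{nonzero genus 1} and \ref{oec of rational strata} to see the fiberwise Euler characteristic depends only on $\Gamma$, and apply Proposition \ref{constant fiber multiplicativity} stratum by stratum. The additional care you flag about properness, surjectivity and representability surviving restriction to $\mathcal{M}^{\Gamma}$ is sound and slightly more explicit than the paper's own treatment.
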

\begin{proof}
\noindent
The stratification of \(\overline{\mathcal{M}}_{g,n}\) by dual graph induces a stratification of \(\overline{\mathcal{J}}^d_{g,n} = \underset{\Gamma \in G(g,n)}{\bigsqcup}\overline{\mathcal{J}}^d_{g,n}|_{\mathcal{M}^{\Gamma}}\). By additivity of orbifold Euler characteristics with respect to locally closed stratifications:
\begin{align*}
\chi_{orb}(\overline{\mathcal{J}}^d_{g,n}) = \sum_{\Gamma \in G(g,n)}\chi_{orb}(\overline{\mathcal{J}}^d_{g,n}|_{\mathcal{M}^{\Gamma}}).
\end{align*}
Suppose \(\Gamma \in G(g,n)\) and \([(C^{\Gamma};p_1,...,p_n)]\) is in \(\mathcal{M}^{\Gamma}(\mathbb{C})\) such that \\\(\overline{J}^d_{\sigma(C^{\Gamma})}(C^{\Gamma})\) is the fiber over \([(C^{\Gamma};p_1,...,p_n)]\) which is a smoothable, fine compactified Jacobian with \(\sigma(C^{\Gamma})\) being a corresponding stability condition. Then combining Proposition \ref{constant fiber multiplicativity} with Corollary \ref{nonzero genus 1} and Corollary \ref{oec of rational strata}, we obtain:
\begin{align*}
\chi_{orb}(\overline{\mathcal{J}}^d_{g,n}|_{\mathcal{M}^{\Gamma}}) = \chi_{orb}(\mathcal{M}^{\Gamma})\chi_{top}(\overline{J}^d_{\sigma(C^{\Gamma})}(C^{\Gamma})) \\=  \begin{cases}c(\Gamma)\chi_{orb}(\mathcal{M}^{\Gamma})\;\;\; \text{if }\Gamma \in G(g,n)^0\\
0\;\;\;\;\;\;\;\;\;\;\;\;\;\;\;\;\;\;\;\;\;\; \text{otherwise}.\end{cases}
\end{align*}
So \(\chi_{orb}(\overline{\mathcal{J}}^d_{g,n}) = \underset{\Gamma \in G(g,n)^0}{\sum}c(\Gamma)\chi_{orb}(\mathcal{M}^{\Gamma})\).
\end{proof}

\begin{theorem}\label{jacobian eec formula}
For all \(g\) and \(n\) such that \(2g-2+n>0\),
\begin{align*}
\chi_{orb}(\overline{\mathcal{J}}^d_{g,n}) = \frac{1}{2^g(g!)}\chi(\overline{\mathcal{M}}_{0,2g+n})
\end{align*}
where \(\chi(\overline{\mathcal{M}}_{0,2g+n})\) is the ordinary topological Euler characteristic of the variety \(\overline{\mathcal{M}}_{0,2g+n}\).
\end{theorem}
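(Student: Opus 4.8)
The plan is to start from Theorem \ref{compactified jacobian oec formula}, which already reduces the statement to the purely combinatorial identity
\[
\sum_{\Gamma \in G(g,n)^0} c(\Gamma)\,\chi_{orb}(\mathcal{M}^{\Gamma}) = \frac{1}{2^g(g!)}\chi(\overline{\mathcal{M}}_{0,2g+n}),
\]
so no further geometry of the Jacobian is needed. First I would rewrite both sides as weighted sums of products of Euler characteristics of open strata $\mathcal{M}_{0,m}$. For the left-hand side I use the standard description of the boundary stratum $\mathcal{M}^{\Gamma} \cong \big[\prod_{v}\mathcal{M}_{0,n_v}\,/\,\mathrm{Aut}(\Gamma)\big]$, where $n_v$ is the valence of $v$ (legs plus half-edges). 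Since the connected scheme $\prod_v \mathcal{M}_{0,n_v}$ covers the integral stack $\mathcal{M}^{\Gamma}$ by a finite étale map of degree $|\mathrm{Aut}(\Gamma)|$, properties 2 and 3 of Theorem \ref{three properties} give $\chi_{orb}(\mathcal{M}^{\Gamma}) = \frac{1}{|\mathrm{Aut}(\Gamma)|}\prod_v \chi(\mathcal{M}_{0,n_v})$. For the right-hand side I stratify $\overline{\mathcal{M}}_{0,2g+n}$ by topological type; because genus $0$ stable trees with distinctly labelled legs are rigid, every stratum is literally a product $\prod_v \mathcal{M}_{0,n_v}$ with no automorphisms, so additivity (property 1) gives $\chi(\overline{\mathcal{M}}_{0,2g+n}) = \sum_{T'}\prod_v \chi(\mathcal{M}_{0,n_v})$, the sum running over stable trees $T'$ with $2g+n$ labelled legs.

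The combinatorial heart is a cut-and-glue correspondence governed by the hyperoctahedral group $B_g = (\mathbb{Z}/2\mathbb{Z})^g \rtimes S_g$, of order $2^g g!$. I would label the $2g+n$ marked points of $\overline{\mathcal{M}}_{0,2g+n}$ by $\{1,\dots,n\}\cup\{1^+,1^-,\dots,g^+,g^-\}$ and, given a labelled tree $T'$, glue $i^+$ to $i^-$ for each $i$. Since all vertices have genus $0$ and $T'$ is connected, this produces a connected, stable, genus $g$ graph $\Gamma\in G(g,n)^0$ in which the edges of $T'$ form a distinguished spanning tree $T$ and the $g$ glued edges are exactly the non-tree edges. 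Conversely, cutting the $g$ non-tree edges of a pair $(\Gamma,T)$ recovers such a tree, and $B_g$ acts simply transitively on the set of labellings of those $g$ cut edges (the factor $S_g$ orders the edges, the factor $(\mathbb{Z}/2\mathbb{Z})^g$ orients them). The weight $\prod_v \chi(\mathcal{M}_{0,n_v})$ depends only on the vertex valences, hence is $B_g$-invariant and matches on the two sides of the correspondence.

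The remaining step is the stacky bookkeeping relating $B_g$-orbits to isomorphism classes of pairs $(\Gamma,T)$. I would show that $\mathrm{Aut}(\Gamma,T)$, the automorphisms of $\Gamma$ fixing the legs $1,\dots,n$ and preserving $T$, acts freely on the $B_g$-torsor of labellings of $(\Gamma,T)$, so that the distinct labelled trees obtained from $(\Gamma,T)$ form a single $B_g$-orbit $O$ with $|O| = 2^g g!/|\mathrm{Aut}(\Gamma,T)|$; equivalently $|\mathrm{Stab}_{B_g}(T')| = |\mathrm{Aut}(\Gamma,T)|$. Grouping the $B_g$-invariant weight over all labelled trees into orbits then yields
\[
\chi(\overline{\mathcal{M}}_{0,2g+n}) = 2^g g!\sum_{[(\Gamma,T)]}\frac{1}{|\mathrm{Aut}(\Gamma,T)|}\prod_v \chi(\mathcal{M}_{0,n_v}).
\]
Finally, an orbit–stabilizer count for the action of $\mathrm{Aut}(\Gamma)$ on the set of spanning trees of a fixed $\Gamma$ gives $\sum_{[T]}|\mathrm{Aut}(\Gamma,T)|^{-1} = c(\Gamma)/|\mathrm{Aut}(\Gamma)|$, and since the weight depends only on $\Gamma$ this converts the sum over pairs into $\sum_{\Gamma}c(\Gamma)\chi_{orb}(\mathcal{M}^{\Gamma})$, which is exactly the asserted identity.

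I expect the main obstacle to be precisely this last automorphism bookkeeping: checking that cutting and gluing is compatible with the stack structures, since labelled genus $0$ trees are rigid whereas $(\Gamma,T)$ carries genuine automorphisms coming from flips of loops and swaps of parallel edges, and verifying that the freeness and orbit-size claims survive in the presence of loops and multiple edges, where the $(\mathbb{Z}/2\mathbb{Z})^g$ factor interacts subtly with edge-reversing automorphisms. A careful treatment of these degenerate graph features, rather than any hard geometry, is where the argument must be done with care.
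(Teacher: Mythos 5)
Your proposal is correct and follows essentially the same route as the paper: reduce to the combinatorial identity via Theorem \ref{compactified jacobian oec formula}, apply the Bini--Harer product formula $\chi_{orb}(\mathcal{M}^{\Gamma}) = \prod_v\chi_{orb}(\mathcal{M}_{0,n(v)})/|\mathrm{Aut}(\Gamma)|$, group spanning trees into $\mathrm{Aut}(\Gamma)$-orbits by orbit--stabilizer, and set up the $2^g g!$-to-one (up to $\mathrm{Aut}(\Gamma,T)$) cut-and-glue correspondence with labelled genus-$0$ trees in $G(0,2g+n)$. Your hyperoctahedral-group packaging of the $2^g g!$ labellings and the free $\mathrm{Aut}(\Gamma,T)$-action is exactly the paper's count of the fibers of its surjection $\phi:G(0,2g+n)\rightarrow\widehat{G}(g,n)^0$, including the caveat about loops and parallel edges.
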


\begin{proof}
Let \(\widehat{G}(g,n)^0\) be a set of representatives of isomorphism classes of pairs \((\Gamma, T)\) such that \([\Gamma] \in G(g,n)^0\) and \(T\) is a spanning tree of \(\Gamma\). An isomorphism of \((\Gamma,T)\) is an automorphism of \(\Gamma\) fixing \(T\) and we denote this group by \(\mathrm{Aut}(\Gamma,T)\). The group \(\mathrm{Aut}(\Gamma)\) acts on the set of spanning trees of \(\Gamma\). The stabilizer of a fixed spanning tree \(T\) under this action is given by \(\mathrm{Aut}(\Gamma,T)\). Therefore,
\begin{align*}
\chi_{orb}(\overline{\mathcal{J}}^d_{g,n}) = \sum_{\Gamma \in G(g,n)^0}c(\Gamma)\chi_{orb} (\mathcal{M}^{\Gamma})\\=\sum_{\Gamma \in G(g,n)^0}\sum\limits_{\substack{T \text{ is a}\\\text{spanning}\\ \text{tree of }\Gamma}}\chi_{orb}(\mathcal{M}^{\Gamma}) \\ = \sum_{(\Gamma, T) \in \widehat{G}(g,n)^0}|\mathrm{orb}(T)|\chi_{orb}(\mathcal{M}^{\Gamma}).
\end{align*}
Additionally, by \cite{BiHa}:
\begin{align*}
\chi_{orb}(\mathcal{M}^{\Gamma}) = \frac{\prod_{v \in V(\Gamma)}\chi_{orb}(\mathcal{M}_{0,n(v)})}{|\mathrm{Aut}(\Gamma)|}
\end{align*}
We combine this with the fact that by the orbit stabilizer theorem, \(|\mathrm{Aut}(\Gamma)| = |\mathrm{Aut}(\Gamma, T)||\mathrm{orb}(T)|\) (where \(T\) is any spanning tree of \(\Gamma\)) to obtain:
\begin{align*}
\chi_{orb}(\overline{\mathcal{J}}^d_{g,n}) = \sum_{(\Gamma, T) \in \widehat{G}(g,n)^0}|\mathrm{orb}(T)|\frac{\prod_{v \in V(\Gamma)}\chi_{orb}(\mathcal{M}_{0,n(v)})}{|\mathrm{Aut}(\Gamma)|} \\= \sum_{(\Gamma, T) \in \widehat{G}(g,n)^0}\frac{\prod_{v \in V(\Gamma)}\chi_{orb}(\mathcal{M}_{0,n(v)})}{|\mathrm{Aut}(\Gamma, T)|}.
\end{align*}
\noindent
We define a surjective function \(\phi: G(0, 2g+n) \rightarrow \widehat{G}(g,n)^0\). Gluing pairs of markings \(n+2i-1, n+2i\) for \(1 \leq i \leq g\) of \(T' \in G(g,n)^0\) forms a stable graph \(\Gamma\) with \(n\) legs of genus \(g\) and the images of the original edges in \(T'\) result in a spanning tree \(T\) or \(\Gamma\). We define \(\phi(T') = (\Gamma, T)\). We have \(|\phi^{-1}([(\Gamma,T)])| = \frac{2^g(g!)}{|\mathrm{Aut}(\Gamma,T)|}\) which can be seen as follows.\\
Fixing \((\Gamma,T)\) in \(\hat{G}(g,n)^0\), any \(T'\) in the preimage can be obtained by matching pairs of half-edges which form an edge in \(E(\Gamma)\setminus E(T)\) to the entries of the \(g\) pairs \((n+1,n+2),...,(2g+n-1,2g+n)\). Then \(T'\) is obtained by cutting these edges and identifying the corresponding half-edges with the markings following the assignments previously described. There are \(g!\) ways to match pairs of half-edges in \(E(\Gamma)\setminus E(T)\) with pairs \((n+1,n+2),...,(2g+n-1, 2g+n)\). Once the pairs of half-edges have been assigned, there are \(2^g\) ways of determining the ordering of the half-edges within the pairs. Let \(S\) be the set of these \(2^g(g!)\) choices. Then there is a surjective map \(S \rightarrow \phi^{-1}(\Gamma,T)\) where the image can be identified with the orbit classes of an action of \(\mathrm{Aut}(\Gamma,T)\) on \(S\). This action is described as follows. If \(T' \in S\) and there is an element \(\sigma\) of \(\mathrm{Aut}(\Gamma,T)\) permuting the half-edges in the edge \((1,2)\), then \(\sigma \cdot T'\) is the element of \(S\) obtained by swapping markings 1 and 2 of \(T'\). This action is free so all the orbits, which partition \(S\) have size \(|\mathrm{Aut}(\Gamma, T)|\) and therefore:
\begin{align*}
|\phi^{-1}((\Gamma,T))| =\frac{|S|}{|\mathrm{Aut}(\Gamma,T)|} =  \frac{2^g(g!)}{|\mathrm{Aut}(\Gamma,T)|}.
\end{align*}
Observing that \(G(0,2g+n)^0 = G(0,2g+n)\), we obtain:
\begin{align*}
\chi_{orb}(\overline{\mathcal{J}}^d_{g,n}) = \sum_{(\Gamma, T) \in \widehat{G}(g,n)^0}\sum_{\lbrace T' \in \phi^{-1}((\Gamma,T))\rbrace}\frac{\prod_{v \in V(\Gamma)}\chi_{orb}(\mathcal{M}_{0,n(v)})}{|Aut(\Gamma,T)||\phi^{-1}(\Gamma,T)|} \\=\sum_{T' \in G(0, 2g+n)}\frac{\prod_{v \in V(T')}\chi_{orb}(\mathcal{M}_{0,n(v)})}{2^g(g!)}
= \frac{\chi_{orb}(\overline{\mathcal{M}}_{0, 2g + n})}{2^g(g!)}.
\end{align*}
Where in the last two equalities, we use the fact that \(\overline{\mathcal{M}}_{0,2g+n}\) has a locally closed stratification \(\overline{\mathcal{M}}_{0,2g+n} = \underset{T' \in G(0,2g+n)}{\bigsqcup}\mathcal{M}^{T'}\) and the fact that for \(T' \in G(0,2g+n)\), the automorphism group of \(T'\) is trivial and therefore, the gluing morphism \(\xi_{T'}: \prod_{v \in V(T')}\mathcal{M}_{0,n(v)} \rightarrow \mathcal{M}^{T'}\) is an isomorphism onto its image.

\noindent
We conclude using the fact that \(\overline{\mathcal{M}}_{0,2g+n}\) is a projective variety, hence its orbifold Euler characteristic is equal to its topological Euler characteristic.
\end{proof}
\bibliographystyle{alpha}
\bibliography{references}

\newcommand{\etalchar}[1]{$^{#1}$}
\begin{thebibliography}{HMP{\etalchar{+}}22}

\bibitem[ACG11]{ACVII}
Enrico Arbarello, Maurizio Cornalba, and Phillip~A. Griffiths.
\newblock {\em Geometry of Algebraic Curves Volume II}.
\newblock Springer-Verlag, 2011.

\bibitem[Ale04]{Alex}
Valery Alexeev.
\newblock Compactified {J}acobians and the {T}orelli map.
\newblock {\em Mathematische Nachrichten}, 40:1241--1265, 2004.

\bibitem[Alp]{Alp}
Jarod Alper.
\newblock Stacks and moduli.
\newblock \url{https://sites.math.washington.edu/~jarod/moduli.pdf}.

\bibitem[Bea99]{Beau}
Arnaud Beauville.
\newblock {Counting rational curves on ${\text{K3}}$ surfaces}.
\newblock {\em Duke Mathematical Journal}, 97(1):99 -- 108, 1999.

\bibitem[BH11]{BiHa}
Gilberto Bini and John Harer.
\newblock {E}uler characteristics of moduli spaces of curves.
\newblock {\em Inventiones Mathematicae}, 13:487--512, 2011.

\bibitem[BHP{\etalchar{+}}23]{MR23}
Younghan Bae, David Holmes, Rahul Pandharipande, Johannes Schmitt, and Rosa Schwarz.
\newblock Pixton's formula and {A}bel-{J}acobi theory on the {P}icard stack.
\newblock {\em Acta Math.}, 230(2):205--319, 2023.

\bibitem[BL]{BaLh}
Younghan Bae and Hyenho Lho.
\newblock Tautological relations on the relative picard scheme.
\newblock to appear.

\bibitem[Cap94]{Cap94}
Lucia Caporaso.
\newblock A compactification of the universal {P}icard variety over the moduli space of stable curves.
\newblock {\em J. Amer. Math. Soc.}, 3(7):589--660, 1994.

\bibitem[CMZ22]{CMZ}
Matteo Costantini, Martin Möller, and Jonathan Zachhuber.
\newblock The {C}hern classes and the {E}uler characteristic of the moduli spaces of abelian differentials.
\newblock {\em Forum of Mathematics, Pi}, 10:1--55, 2022.

\bibitem[EGK00]{EsKl}
E.~Esteves, Mathieu Gagne, and S.~Kleiman.
\newblock Abel maps and presentation schemes, 2000.

\bibitem[Est01]{Est01}
Eduardo Esteves.
\newblock Compactifying the relative {J}acobian over families of reduced curves.
\newblock {\em Transactions of the American Mathematical Society}, 353(8):3045--3095, 2001.

\bibitem[Har77]{Har}
Robin Hartshorne.
\newblock {\em Algebraic Geometry}.
\newblock Springer, 1977.

\bibitem[HKP18]{MR3851130}
David Holmes, Jesse~Leo Kass, and Nicola Pagani.
\newblock Extending the double ramification cycle using {J}acobians.
\newblock {\em Eur. J. Math.}, 4(3):1087--1099, 2018.

\bibitem[HMP{\etalchar{+}}22]{LogDR}
D.~{Holmes}, S.~{Molcho}, R.~{Pandharipande}, A.~{Pixton}, and J.~{Schmitt}.
\newblock {Logarithmic double ramification cycles}.
\newblock {\em arXiv e-prints}, page arXiv:2207.06778, July 2022.

\bibitem[HZ86]{HaZa}
John Harer and Don Zagier.
\newblock The {E}uler characteristic of the moduli space of curves.
\newblock {\em Inventiones Mathematicae}, 85:457--485, 1986.

\bibitem[KP19]{KP19}
Jesse~Leo Kass and Nicola Pagani.
\newblock The stability space of compactified universal {J}acobians.
\newblock {\em Transactions of the American Mathematical Society}, 372(7):4851–4887, June 2019.

\bibitem[Mel16]{Mel16}
Margarida Melo.
\newblock Compactifications of the universal {J}acobian over curves with marked points, 2016.

\bibitem[MV12]{MeVi}
Margarida Melo and Filippo Viviani.
\newblock Fine compactified {J}acobians.
\newblock {\em Mathematische Nachrichten}, 285:997--1031, 2012.

\bibitem[Pan96]{Pan96}
Rahul Pandharipande.
\newblock A compactification over \(\overline{M}_g\) of the universal moduli space of slope-semistable vector bundles.
\newblock {\em J. Amer. Math. Soc.}, 2(9):425--471, 1996.

\bibitem[PT22]{PT22}
Nicola Pagani and Orsola Tommasi.
\newblock Geometry of genus one fine compactified universal {J}acobians.
\newblock {\em International Mathematics Research Notices}, 10:8495--8543, 2022.

\bibitem[PT23]{PT23}
Nicola Pagani and Orsola Tommasi.
\newblock Stability conditions for line bundles on nodal curves, 2023.

\bibitem[Sim94]{Sim94}
C.~T. Simpson.
\newblock Moduli of representations of the fundamental group of a smooth projective variety.
\newblock {\em Publications mathématiques de l’I.H.É.S.}, 79:47--129, 1994.

\bibitem[Spa21]{Es}
Edwin.~H. Spanier.
\newblock {\em Algebraic Topology}.
\newblock Springer-Verlag, 1921.

\bibitem[Vis89]{Vis}
Angelo Vistoli.
\newblock Intersection theory on algebraic stacks and on their moduli spaces.
\newblock {\em Inventiones Mathematicae}, 97:613--670, 1989.

\bibitem[Yin16]{MR3477953}
Qizheng Yin.
\newblock Cycles on curves and {J}acobians: a tale of two tautological rings.
\newblock {\em Algebr. Geom.}, 3(2):179--210, 2016.

\bibitem[Zei08]{Zei}
Fouad~El Zein.
\newblock Topology of algebraic morphisms.
\newblock \url{https://webusers.imj-prg.fr/~fouad.elzein/Contemporary.pdf}, 2008.

\end{thebibliography}
\end{document}